\def\disp{\displaystyle}
\def\dref#1{(\ref{#1})}
\theoremstyle{plain}
\newtheorem{theorem}{Theorem}[section]
\newtheorem{lemma}{Lemma}[section]
\newtheorem{corollary}{Corollary}[section]
\theoremstyle{definition}
\newtheorem{remark}{Remark}[section]
\numberwithin{equation}{section}
\begin{document}

\title{\bf A new result for
 2D boundedness of solutions to a chemotaxis--haptotaxis model  with/without sub-logistic source}
\author{
Tian Xiang$^{a}$
and Jiashan Zheng$^{b}$\\
$^{a}$   Institute for  Mathematical Sciences,\\
 Renmin University of China, Beijing, 100872, P. R. China \\
$^{b}$  School of Mathematics and Statistics Science,\\
Ludong University, Yantai 264025,  P. R. China
}
\date{}

\maketitle \vspace{0.3cm}
\noindent
\begin{abstract}
We consider  the  Neumann problem for a coupled chemotaxis-haptotaxis model of cancer invasion   with/without kinetic source in a 2D bounded and smooth  domain.  For a large class of cell
kinetic sources including zero source and sub-logistic sources, we detect an explicit condition involving the chemotactic strength, the asymptotic "damping" rate, and the initial mass of cells to ensure uniform-in-time boundedness for the corresponding  Neumann  problem. Our finding
 significantly improves existing 2D global existence and boundedness   in related chemotaxis-/haptotaxis systems.
\end{abstract}

\vspace{0.3cm}
\noindent {\bf\em Key words:}~ Chemotaxis, haptotaxis, sub-logistic source, boundedness;

\noindent {\bf\em 2010 Mathematics Subject Classification}:~  35K59, 35K55,
  35K20, 92C17.

\newpage
\section{Introduction}
Chemotaxis is the motion of cells moving towards the higher concentration of a chemical signal.
A  celebrated minimal mathematical system modelling chemotaxis was initially proposed by Keller and Segel in 1970 (\cite{Keller79}), which, of minimal form, reads as
 \begin{equation*}
 \left\{\begin{array}{ll}
  u_t=\Delta u-\chi \nabla\cdot(u\nabla v), &
x\in \Omega, t>0,\\
  \tau v_t=\Delta v +u- v, &
x\in \Omega, t>0,
 \end{array}\right.\label{1.ssdessdffferrttrrfff1}
\end{equation*}
where $ \chi>0, \tau\geq0$, $ u$ is  the cell density,  $v$ is the chemical concentration, and $\Omega\subset \mathbb{R}^n(n\geq1)$ is a bounded smooth domain.
Since then,  numerous variants of the Keller-Segel system were proposed and have been extensively studied, we refer to the review papers \cite{Bellomo1216,Hillen79,Horstmann2710}
for detailed descriptions of those models and their developments.  The striking future of the KS type chemotaxis model  is the possibility of singularity formation of solutions, which strongly depends on the underlying space dimension and the total mass of cells (\cite{Horstmannff79,Horstmann791,Winkler792, Winkler793}).

To investigate the birth-death effect  of  population,
considerable effort has been devoted to the following    Keller-Segel minimal-chemotaxis-logistic model and its various variants:
\begin{equation}
 \left\{\begin{array}{ll}
  u_t=\Delta u-\chi\nabla\cdot(u\nabla v)+ \kappa u -\mu u^2, &x\in \Omega, t>0,\\
 \tau v_t=\Delta v +u- v, &x\in \Omega, t>0,\\
 \frac{\partial u}{\partial \nu}=\frac{\partial v}{\partial \nu}=0, &x\in \partial\Omega, t>0,\\
 u(x,0)=u_0(x),v(x,0)=v_0(x), &x\in \Omega,
 \end{array}\right.\label{1ssddd.1}
\end{equation}
where $\kappa\in\mathbb{R},  \mu>0$ and  $\frac{\partial}{\partial\nu}$ means the outward  normal derivative on $\partial\Omega$. The  presence of  logistic  source has been shown to have an effect of   blow-up prevention. Indeed, for  $n=1,2$,  even arbitrarily  small $\mu>0$ is enough to prevent blow-up by ensuring all solutions to  \eqref{1ssddd.1} are global-in-time and uniformly bounded for all reasonably initial data  \cite{HP04, JX18-CRAS, Osakix391, Xiang15-JDE}. This is even  true for a two-dimensional chemotaxis system with singular sensitivity \cite{FW14, ZZ17-ZAMP}.  A very recent subtle study  from \cite{Xiang18-JMP} shows   that logistic damping is not the weakest damping to
 ensure boundedness for \eqref{1ssddd.1} in $2$-D. More precisely, with the logistic source $\kappa u -\mu u^2$ in \eqref{1ssddd.1} replaced by a locally bounded kinetic term $f(u)$  satisfying
$f(0) \geq 0$ as well as
\begin{equation}
(\chi-\hat{\mu}_1)^+\hat{M}_1<\frac{1}{2C_{GN}^4},
\label{1.3xcxddfddddffssss29}
\end{equation}
where $\hat{\mu}_1\in [0,\infty]$ and $C_{GN}$ is the Gagliardo-Nirenberg constant and
\begin{equation}
\hat{\mu}_1:=\liminf_{s\rightarrow+\infty}\left\{-f(s)\frac{ \ln s}{s^2}\right\}, \quad \quad
\hat{M}_1=\|u_0\|_{L^1(\Omega)}+|\Omega|\inf_{\eta>0}\frac{\sup\{f(s)+\eta s:s>0\}}{s},
\label{1.3xcxddfffssss29}
\end{equation}
then \dref{1ssddd.1}  admits a unique global-in-time and uniformly-in-time bounded classical solution. Evidently, besides the standard logistic source, $f$ covers sub-logistic sources like:
\begin{equation}\label{sub-log}
f_1(u)=au-\frac{bu^2}{\ln^\gamma (u+1)}\ \  \text{ and } \ \   f_2(u)=au-\frac{bu^2}{\ln(\ln(u+e))}
\end{equation}
for some $a\in\mathbb{R}, b>0,\gamma\in(0,1)$. This provides a further understanding about the chemotactic aggregation induced by $-\chi\nabla\cdot(u\nabla v)$ in \eqref{1ssddd.1} in 2D setting.

In the cases $n\geq3$,  the competition between chemotactic aggregation  and logistic  damping becomes increasingly complicated; for \eqref{1ssddd.1} with $\tau=0$, the effect of logistic  damping  is stronger than that of chemotactic aggregation  when $\mu\geq \frac{(n-2)}{n}\chi$ \cite{Huaffrk,Kangaffrk,Tello710, Xiangssffrk}; for \eqref{1ssddd.1} with $\tau>0$,  the situation that logistic  damping wins over chemotactic aggregation has been studied qualitatively and quantitatively in a series of works under certain largeness on the ratio $\frac{\mu}{\chi}$ \cite{ML16, Winkler37103, Xiang18-JMAA, Xiang18-SIAP,Zhengssdddssddddkkllssssssssdefr23}. For more properties of related chemotaxis models with more complex mechanisms, we refer to \cite{Bellomo1216, Wang79, Win18, Zheng0,Zhengsssddswwerrrseedssddxxss} and the references therein.

For chemotaxis-only systems, our starting motivation  here is to understand further how weak a degradation of cell is needed to suppress  the minimal chemotactic aggregation as appeared in \eqref{1ssddd.1} so that no blow-up can occur in 2D setting. Mathematically, can those sub-logistic restrictions  \eqref{1.3xcxddfddddffssss29} and \eqref{1.3xcxddfffssss29}  or concretely, sub-logistic sources specified in \eqref{sub-log} somehow be further weakened while maintaining 2D global boundedness?

To inspire our second and also primary  motivation, we observe that one important extension of the minimal KS chemotaxis model  to a more complex cell
migration mechanism (known as haptotaxis mechanism) has been introduced by Chaplain and Lolas \cite{Chaplain1,Chaplain7} (see also  Winkler et al.  \cite{Bellomo1216,Tao72}) to describe processes of cancer invasion into surrounding healthy tissue.  In  this context, $u$ represents the density of cancer cell, $v$ denotes the concentration of matrix degrading enzyme (MDE), and $w$ stands for the density of extracellular matrix (ECM). Then $(u,v,w)$  verifies the following no-flux  boundary and initial value problem for the minimal chemotaxis-haptotaxis model:
\begin{equation}
 \left\{\begin{array}{ll}
  u_t=\Delta u-\chi\nabla\cdot(u\nabla v)-\xi\nabla\cdot
  (u\nabla w)+ f(u,w), &
x\in \Omega, t>0,\\
 \tau v_t=\Delta v +u- v, &
x\in \Omega, t>0,\\
 w_t=- vw, &
x\in \Omega, t>0,\\
 \frac{\partial u}{\partial \nu}-\chi u\frac{\partial v}{\partial \nu}-\xi w\frac{\partial w}{\partial \nu}=\frac{\partial v}{\partial \nu}=0, &
x\in \partial\Omega, t>0,\\
 u(x,0)=u_0(x),\tau v(x,0)=\tau v_0(x),w(x,0)=w_0(x), &
x\in \Omega,
 \end{array}\right.\label{1.1}
\end{equation}
 where  $\chi$ and  $\xi > 0$ measure the chemotactic
and haptotactic sensitivities respectively, and, $f(u,w)$  characterizes the proliferation and death
of cancer cells including competition for space with the ECM. As for the initial data $(u_0, \tau v_0,w_0)$, for convenience, we assume throughout this paper,  for some $\vartheta\in(0,1)$ and $A\geq0$, that
\begin{equation}\label{x1.731426677gg}
\left\{
\begin{array}{ll}
 u_0\in C(\bar{\Omega})~~\mbox{with}~~u_0\geq0~~\mbox{in}~~\Omega~~\mbox{and}~~u_0\not\equiv0,\\
 \tau v_0\in W^{1,\infty}(\Omega)~~\mbox{with}~~\tau v_0\geq0~~\mbox{in}~~\Omega,\\
 w_0\in C^{2+\vartheta}(\bar{\Omega})~~\mbox{with}\ \ w_0\geq0, \ \ |\nabla w_0|^2\leq A w_0 \ \ \mbox{in}~~\Omega~~\mbox{and}~~\frac{\partial w_0}{\partial\nu}=0~~\mbox{on}~~\partial\Omega. \\
\end{array}
\right.
\end{equation}
For the  commonly chosen logistic-type competition source $f$:
\begin{equation}\label{f-log}
f(u,w)=\mu u (1-u-w), \quad \mu>0,
\end{equation}
the global solvability, boundedness and asymptotic behavior for  models of type  \eqref{1.1} has been widely explored.  For haptotaxis-only models, i.e., $\chi=0$, the global existence and boundedness are investigated  in \cite{MCP10, MR08, TZ07,  WW07}   and
asymptotic behavior of solution is studied in \cite{LM10} with/without logistic source.  In the parabolic-elliptic case, i.e.,  $\tau = 0$,    Tao and Wang \cite{Tao2} proved  the global existence and  boundedness of  classical solutions to \eqref{1.1}  for any $\mu>0$ in 2D, and for large $\mu > 0$ in 3D; later on, Tao and Winkler subsequently studied global boundedness for
model \dref{1.1}  under the condition $\mu>\chi$ \cite{Taox26} and $\mu> \frac{(n-2)^+}{n}\chi$ \cite{Taox26216, TW15-SIAM}, and  also gave the exponential decay of $w$  under  additional   smallness on $w_0$; in the parabolic-parabolic case, i.e., $\tau=1$, Tao and Wang \cite{Tao3}  proved that the model \dref{1.1}  possesses a unique global-in-time classical solution for any $\chi>0$ in 1D, and for large $\frac{\mu}{\chi}$ in 2D; the latter was  improved to any $\mu>0$ by Tao \cite{Taoss}. In 3D, the global boundedness was obtained by Cao \cite{Caoss} for large $\frac{\mu}{\chi}$.  These are the main progressive developments on the minimal chemotaxis-haptotaxis model \dref{1.1}. While, we would like to mention  there appears a rapidly growing literature on a general framework of \dref{1.1} with more complex mechanisms like nonlinear diffusion,  remodeling effects  and generalized logistic sources etc;, it reads essentially as
\begin{equation}
 \left\{\begin{array}{ll}
  u_t=\nabla\cdot((u+\epsilon)^m\nabla u)-\chi\nabla\cdot(u^\alpha\nabla v)-\xi\nabla\cdot
  (u^\beta\nabla w)+ \mu u^\gamma(1-u-w),\\
 \tau v_t=\Delta v +u^\delta- v,\\
 w_t=- vw+\eta w(1-w-u)
 \end{array}\right.\label{CH-gen}
\end{equation}
for some given nonnegative parameters $\epsilon, \eta, \tau,  \alpha, \beta, \gamma,\delta, \chi, \xi, \mu$ and $m\in\mathbb{R}$. For effects of  possible interactions between various mechanisms on  dynamical properties of the corresponding IBVP for model \eqref{CH-gen} and its variants, we refer the interested reader  to
\cite{Jindd3344455678887,Zheng445ss55,  Li445666, liujinijjkk1, PangPang1, PangPang2,PW19, Tao1, Tao11, Tao72,Tao79477,TW19, Wanghhss,
Zhddengssdeeezseeddd0,Zheng1111445ss55, Zheng44555}.

By comparison, we find  that results on chemotaxis-/haptotaxis systems (at least, for the minimal case like \eqref{1ssddd.1} and \eqref{1.1}) are similar to that of  chemotaxis-only systems obtained upon setting $w\equiv0$; roughly, in $2$-D, any presence of logistic source will be sufficient to rule out blow-up and strong logistic damping can prevent blow-up in $\geq 3$-D. In this work,  based on \cite{Xiang18-JMP}, we are wondering mainly from mathematical point of view, in the already complex minimal chemotaxis-haptotaxis  model \eqref{1.1}, whether or not a weaker damping source  than  the standard  logistic source \eqref{f-log} is still sufficient to ensure 2D global  boundedness so as to provide further understanding  about the minimal chemotactic aggregation   in \eqref{1.1} vs the kinetic source $f$. In the future, we shall consider providing further understanding about the chemotactic aggregation   in \eqref{CH-gen} vs generalized logistic sources and other ingredients. Our finding on the chemotactic aggregation in \eqref{1.1} vs the kinetic  source $f$ including zero source, logistic source like \eqref{f-log} and sub-logistic source is captured in our following main theorem:
\begin{theorem}\label{theorem3}Let $\chi,\xi>0$, $\tau\geq0$, $\Omega\subset\mathbb{R}^2$ be a bounded and smooth   domain,  the initial data $(u_0, \tau v_0,w_0)$ satisfy \eqref{x1.731426677gg} and,  finally,  let the locally bounded source $f$  satisfy $f (0,w)\geq0$ as well as
 \begin{equation}\label{f-source}
 \text{either } f\equiv 0 \text{ or } \left\{\exists a\in \mathbb{R}, b>0 \ s.t.\   f(s,w)\leq a-bs \text{ on } (0,+\infty)\times (0,\max_{x\in\bar{\Omega}}w_0(x))\right\}.
 \end{equation}
 For positive integer $r\geq 1$, we define the extended  asymptotic "damping" rate $\mu_r$ as
\begin{equation}
\mu_r:=\liminf_{s\rightarrow+\infty}\left\{\inf_{0\leq w\leq \max_{x\in\bar{\Omega}}w_0(x)}\left\{-f(s,w)\frac{\prod_{i=1}^r \ln^{[i]}s}{s^2}\right\}\right\},
\label{1.3xcxddfff29}
\end{equation}
where and below, as usual,  $\ln^{[i]}(s)$ denotes the $i$-th iterate of $\ln(s)$. Assume that
  \begin{equation}\label{thm-con}
\text{either } \Bigr\{\tau=0, \ \exists r\geq 1, \  s.t. \  \mu_r\in(0, +\infty] \Bigr\} \text{ or }    (\chi-\mu_1)^+M_1<\frac{1}{2C_{GN}^4},
  \end{equation}
  where  $C_{GN}$ is the Gagliardo-Nirenberg  constant (cf. \eqref{cz2.51ssdsdddddddfjfiiid14114})  and  $M_1$ is finite and is  given by
  \begin{equation}\label{M-def}
  M_1=\|u_0\|_{L^1(\Omega)}+|\Omega|\inf_{\eta\in(0, b]}\left\{\begin{array}{ll}
  0, &\text{if } f\equiv0,\\
 \frac{\sup\left\{ f(s,w)+\eta s:\  (s,w)\in(0,\infty)\times(0,\max_{x\in\bar{\Omega}}w_0(x)) \right\}}{\eta},&\text{if } f\not\equiv0.
 \end{array}\right.
\end{equation}
Then there exists a unique nonnegative solution triple $(u,v,w)\in (C^{2,1}
(\bar{\Omega}\times(0,\infty)))^3$ which solves \dref{1.1} classically. Moreover, there exists $C=C(u_0, \tau v_0, w_0, |\Omega|, \chi,\xi, f)>0$ such that
\begin{equation}
\| u(\cdot, t)\|_{L^\infty(\Omega)}+\| v(\cdot, t)\|_{W^{1,\infty}(\Omega)}+\| w(\cdot, t)\|_{W^{1,\infty}(\Omega)}\leq C, \quad \forall t>0.
\label{uvw-bdd}
\end{equation}
\end{theorem}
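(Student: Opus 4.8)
\noindent\emph{Outline of a proof.} The plan is to run the standard three-step programme for problems of this kind — local solvability, one decisive \emph{a priori} bound, and a bootstrap to $L^\infty$ — the new input being an entropy estimate calibrated to the kinetic exponent $\mu_r$ and, when $\tau=0$, to the iterated logarithms. First I would invoke the usual fixed-point/Amann-type theory to produce a unique maximal classical solution $(u,v,w)$ on $[0,T_{\max})$ with $u,v,w\ge0$ and the extensibility criterion that $T_{\max}<\infty$ implies $\|u(\cdot,t)\|_{L^\infty(\Omega)}\to\infty$. From $w_t=-vw\le0$ one obtains $0\le w\le w_M:=\max_{\bar\Omega}w_0$; moreover $\partial_\nu w=0$ is preserved, and differentiating the $w$-equation together with $|\nabla w_0|^2\le Aw_0$ gives, for $\phi:=|\nabla w|^2/w$, the pointwise identity $\phi_t+v\phi=-2\nabla v\cdot\nabla w$, which is the device that will eventually control $\nabla w$. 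Integrating the $u$-equation and using $f(s,w)\le a-bs$ (when $f\not\equiv0$) in the form $\frac{d}{dt}\int_\Omega u+\eta\int_\Omega u\le|\Omega|\sup\{f(s,w)+\eta s\}$ for $\eta\in(0,b]$ yields $\|u(\cdot,t)\|_{L^1(\Omega)}\le M_1$ for all $t>0$, with $M_1$ exactly as in \eqref{M-def} (and conservation of mass when $f\equiv0$). Since $0\le w\le w_M$, I would then use the substitution $z:=u\,e^{-\xi w}$, an isomorphism of the relevant function spaces, to recast \eqref{1.1} as a chemotaxis-type problem for $z$ with uniformly elliptic leading part $\nabla\cdot(e^{\xi w}\nabla z)$, homogeneous Neumann data, and only a lower-order drift $\xi\nabla w\cdot\nabla z$ and zeroth-order remainder in $v,w,\nabla w$; in the entropy estimate this haptotactic residue is absorbed via Young's inequality in the form $\varepsilon\int_\Omega|\nabla u|^2/u+C_\varepsilon\int_\Omega u|\nabla w|^2$, with $\int_\Omega u|\nabla w|^2$ controlled through the $\phi$-identity, so that matters reduce to the chemotaxis-only framework of \cite{Xiang18-JMP}.

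\smallskip
The heart of the argument is the entropy estimate. Differentiating $\int_\Omega u\ln u$, integrating by parts, inserting $\Delta v=\tau v_t-u+v$, and (when $\tau>0$) enlarging the functional by $\tfrac{\chi\tau}{2}(\|\nabla v\|_{L^2}^2+\|v\|_{L^2}^2)$ while testing the $v$-equation by $v_t$, one is led to
\begin{equation*}
\frac{d}{dt}\Bigl(\int_\Omega u\ln u+\tfrac{\chi\tau}{2}\int_\Omega(|\nabla v|^2+v^2)\Bigr)+\int_\Omega\frac{|\nabla u|^2}{u}+\chi\int_\Omega uv\le\chi\int_\Omega u^2+\int_\Omega f(u,w)(\ln u+1)+\xi\int_\Omega\nabla u\cdot\nabla w .
\end{equation*}
The definition of $\mu_1$ yields $f(s,w)\ln s\le-(\mu_1-\varepsilon)s^2$ for $s$ large, hence $\chi\int_\Omega u^2+\int_\Omega f(\ln u+1)\le(\chi-\mu_1+\varepsilon)^+\int_\Omega u^2+C$; the Gagliardo–Nirenberg inequality \eqref{cz2.51ssdsdddddddfjfiiid14114} applied to $\sqrt u$, together with $\|u\|_{L^1}\le M_1$, then bounds $\int_\Omega u^2$ by $\int_\Omega|\nabla u|^2/u$ up to a factor proportional to $C_{GN}^4M_1$, so that the hypothesis $(\chi-\mu_1)^+M_1<\tfrac1{2C_{GN}^4}$ (after absorbing the haptotactic term too) leaves a strictly positive multiple of $\int_\Omega|\nabla u|^2/u$ on the left-hand side. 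Re-estimating $\int_\Omega u\ln u$ and $\|\nabla v\|_{L^2}^2$ back in terms of $\int_\Omega|\nabla u|^2/u$ converts this into a differential inequality $\mathcal{F}'+\varepsilon_0\mathcal{F}\le C$ for the functional $\mathcal{F}$, which gives uniform-in-time bounds on $\int_\Omega u\ln u$ and on $\int_t^{t+1}\!\int_\Omega|\nabla u|^2/u$. In the parabolic-elliptic alternative $\tau=0$ with $\mu_r\in(0,+\infty]$ for some $r\ge1$, I would replace the absorption of $\chi\int_\Omega u^2$ by an argument using both the representation $v=(I-\Delta)^{-1}u$ and the iterated-logarithmic super-linearity $-f(s,w)\ge(\mu_r-\varepsilon)s^2/\prod_{i=1}^r\ln^{[i]}s$: a tuned family of $L^p$-estimates — equivalently, a super-linear entropy $\int_\Omega u\,\ell(u)$ with $\ell$ assembled from $\ln^{[1]},\dots,\ln^{[r-1]}$ — in which the kinetic dissipation $\int_\Omega u^{p+1}/\prod_i\ln^{[i]}u$ outweighs the chemotactic production for \emph{every} value of the mass, hence yielding uniform-in-time $L^p$-bounds for all $p<\infty$ with no smallness restriction.

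\smallskip
With $u\in L^\infty((0,\infty);L\log L)$ (respectively $u\in L^\infty((0,\infty);L^p)$ for all $p<\infty$) available, I would run the familiar two-dimensional bootstrap — alternating the variation-of-constants formula for $u$ with the smoothing estimate $\|\nabla v(\cdot,t)\|_{L^q}\le C\sup_{s>0}\|u(\cdot,s)\|_{L^p}$ coming from the $v$-equation (or from elliptic regularity when $\tau=0$) — to obtain $\sup_{t>0}\|u(\cdot,t)\|_{L^\infty(\Omega)}<\infty$, and hence $\sup_{t>0}\|v(\cdot,t)\|_{W^{1,\infty}(\Omega)}<\infty$. The uniform $W^{1,\infty}$-bound for $w$ then follows from a now-standard analysis of the $w$-equation, exploiting $|\nabla w_0|^2\le Aw_0$, the uniform bound on $v$, and the dissipative structure $\nabla w_t=-v\nabla w-w\nabla v$. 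Since all these bounds are uniform in $t$, the extensibility criterion forces $T_{\max}=\infty$; parabolic regularity then upgrades $(u,v,w)$ to $(C^{2,1}(\bar\Omega\times(0,\infty)))^3$, and \eqref{uvw-bdd} follows.

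\smallskip
The main obstacle is the entropy step. For the general branch it is the sharp bookkeeping that makes the haptotactic correction, the dissipation extracted from $\mu_1$, and the Gagliardo–Nirenberg constant combine into precisely the threshold $(\chi-\mu_1)^+M_1<\tfrac1{2C_{GN}^4}$; for the parabolic-elliptic branch it is squeezing enough coercivity out of the very weak, iterated-logarithmically corrected damping $-f\sim s^2/\prod_{i=1}^r\ln^{[i]}s$ to overcome the two-dimensional chemotactic aggregation for \emph{arbitrary} initial mass — precisely the point at which the improvement over the classical logistic source appears.
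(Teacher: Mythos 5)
Your fully parabolic branch is essentially the paper's argument: the functional $\int_\Omega u\ln u+\frac{\tau\chi}{2}\int_\Omega|\nabla v|^2$, the growth bound $f(s,w)\ln s\le-(\mu_1-\varepsilon)s^2$ extracted from \eqref{1.3xcxddfff29}, and the Gagliardo--Nirenberg absorption of $\int_\Omega u^2$ producing exactly the threshold $(\chi-\mu_1)^+M_1<\frac{1}{2C_{GN}^4}$. Your handling of the haptotactic term, however, is both different and incomplete: absorbing $\xi\int_\Omega\nabla u\cdot\nabla w$ into $\varepsilon\int_\Omega|\nabla u|^2/u+C_\varepsilon\int_\Omega u|\nabla w|^2$ requires a uniform bound on $\int_\Omega u|\nabla w|^2$, and the identity $\phi_t+v\phi=-2\nabla v\cdot\nabla w$ for $\phi=|\nabla w|^2/w$ only yields such a bound if one already controls $\nabla v$, which is not available at the entropy stage. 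The paper avoids this circularity by never producing $\nabla w$ at all: it integrates the haptotactic flux by parts onto $\Delta w$ and uses the one-sided pointwise estimate $-\Delta w\le\tau\|w_0\|_{L^\infty(\Omega)}v+\kappa$ of \eqref{x1.731426677gghh}, a direct consequence of $w=w_0e^{-\int_0^tv}$ and $|\nabla w_0|^2\le Aw_0$; you should either adopt this device or supply the missing control of $\int_\Omega u|\nabla w|^2$.

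The genuine gap is in the parabolic--elliptic branch, which is the new content of the theorem. You claim that a family of $L^p$-estimates closes because ``the kinetic dissipation $\int_\Omega u^{p+1}/\prod_i\ln^{[i]}u$ outweighs the chemotactic production''. It does not: testing the $u$-equation with $u^{p-1}$ and substituting $-\Delta v=u-v$ produces the term $\frac{(p-1)\chi}{p}\int_\Omega u^{p+1}$, which exceeds $\int_\Omega u^{p+1}/\prod_{i=1}^{r}\ln^{[i]}u$ by the unbounded factor $\prod_{i=1}^{r}\ln^{[i]}u$; for every $p>1$ the sub-logistic damping \emph{loses} to the chemotactic production, and this is precisely the difficulty the theorem is designed to overcome. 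The paper's resolution is to work with an entropy that is itself only iterated-logarithmically superlinear, namely $\int_\Omega(u+e^{[m]})\ln^{[m]}(u+e^{[m]})$ with $m=r+1$ (Lemmas \ref{key-energy-id}--\ref{lemma45630223}): for this density the chemotactic production is reduced to $\chi\int_\Omega u^2\bigl(\prod_{i=1}^{m-1}\ln^{[i]}(u+e^{[m]})\bigr)^{-1}$, the kinetic term multiplied by the derivative $\approx\ln^{[m]}$ contributes $-(\chi+1)\int_\Omega u^2\bigl(\prod_{i=1}^{m-1}\ln^{[i]}(u+e^{[m]})\bigr)^{-1}$ because $\mu_{r+1}=+\infty$ follows from $\mu_r>0$, and the coefficient $2h'+(u+k)h''$ of the diffusion term is checked to be nonnegative so diffusion may simply be discarded; no smallness of the mass enters. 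Your sketch also omits the second essential ingredient: having only the very weak bound $\int_\Omega(u+e^{[m]})\ln^{[m]}(u+e^{[m]})\le C$ (much weaker than $L\log L$), the cubic term $\chi\int_\Omega u^3$ arising in the $L^2$-estimate cannot be absorbed by the standard Gagliardo--Nirenberg inequality without a smallness condition on $M_1$; one needs the generalized logarithmic Gagliardo--Nirenberg inequality \eqref{1sdrfgggg.3xcssddffx29} of Lemma \ref{lemma45630jjjj223}, which trades the superlinearity of $g$ for an arbitrarily small $\varepsilon$ in front of $\|\nabla u\|_{L^2}^2\|g(u)\|_{L^1}$. As written, the $\tau=0$ branch of your argument does not close.
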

Before proceeding, there are a few remarks in order.
\begin{remark}
(i) For  logistic  or  super-logistic sources like $f(u,w)=u(a-bu^\theta-w)$ with $a\in\mathbb{R}, b>0$ and $\theta\geq1$ or   sub-logistic sources like
$f(u,w)=u\left(a-w- \frac{b u}{\ln^\gamma  (u+1)}\right)$ with $a\in \mathbb{R}, b>0, \gamma\in(0,1)$ or $ f(u,w)=u\left(a-w- \frac{bu}{\ln(\ln (u+e))}\right)$ with $ a\in \mathbb{R}, b>0$,  one can easily compute  from \eqref{1.3xcxddfff29} that $\mu_1=+\infty$ and so \eqref{thm-con} holds trivially. Therefore, no matter $\tau=0$ or $\tau>0$, the global boundedness for  \eqref{1.1} in 2D is ensured  for all reasonable initial data.

 For $k\geq 1$, we write $e(x)=e^x, e^{[k]}=e^{[k]}(1)$, then, for  a family of sub-logistic sources like
\begin{equation}\label{f-r-def}
f_k(u,w)=u\left(1- w-\frac{\mu u}{\prod_{i=1}^k\ln^{[i]}(u+e^{[i-1]})}\right), \ \ \mu>0,
\end{equation}
we   compute from \eqref{1.3xcxddfff29} that  $\mu_1=\mu_2=\cdots=\mu_{k-1}=0$, $\mu_k=\mu$  and $\mu_r=+\infty$  for any $r>k$. This shows, for $\tau=0$, that the first case of   \eqref{thm-con} holds, and so, we get  the global boundedness for  \eqref{1.1} with $\tau=0$  in 2D   for all reasonable initial data. From these observations, we see that Theorem \ref{theorem3}  improves known 2D global existence and  boundedness with logistic sources  (cf. \cite{Taoss,  Tao3, Tao2, Taox26216, Taox26,TW15-SIAM}) to sub-logistic sources.

While, we have to point out, for $f$ as given in \eqref{f-r-def} and $\tau>0$, we do not know whether or not the global boundednesss for  \eqref{1.1}   in 2D  holds for large initial data. More worse, for  simple  sub-logistic sources like
$$
f(u,w)=u\left(1- w-\frac{\mu u}{ \ln^2(u+1)}\right), \ \ \mu>0,
$$
we are unable to conclude whether or not  the  global boundednesss for  \eqref{1.1} in 2D holds for large initial data, even when $\tau=0$ and $w\equiv0$.

(ii) In the chemotaxis-only case,  i.e.,   $w\equiv0$,  which is allowed by the assumption of $w_0$  in \eqref{x1.731426677gg}, we see that $\hat{\mu}_1$ defined in  \eqref{1.3xcxddfddddffssss29} is simply $\mu_1$ by setting $r=1$ in \eqref{1.3xcxddfff29}, and, when $\tau=0$,    we see that \eqref{thm-con}   greatly  relaxes \eqref{1.3xcxddfddddffssss29} by allowing more weaker damping sources like $f_k(u,0)$ with $f_k(u,w)$ given by \eqref{f-r-def}, which are much weaker than \eqref{sub-log}. Consequently,   Theorem \ref{theorem3} also  improves     \cite[Theorem 1.1]{Xiang18-JMP}.

(iii) In the haptotaxis-only case, i.e., $\chi=0$, we see that \eqref{thm-con} holds automatically, and thus we obtain global boundedness of classical solutions to \eqref{1.1} with/without growth source for large initial data. This goes beyond  global existence in \cite{MR08, TZ07,  WW07}.
\end{remark}

 We note that, under the basic condition \eqref{f-source} which  entails the uniform $L^1$-boundedness of $u$,  the extended damping rate $\mu_r$ defined in \eqref{1.3xcxddfff29} or $\hat{\mu}_1$ defined in  \eqref{1.3xcxddfddddffssss29} is nonnegative. In 2D chemotaxis-related systems as we have here, the key is how to raise the easily obtained $L^1$-boundedness  of $u$  to $L^1$-boundedness  of $g(u)$ with $\lim_{s\rightarrow+\infty}\frac{|g(s)|}{s}=+\infty$.  In the case of $\tau>0$ and weak damping, the damping effect of diffusion is stronger than that of  kinetic source. To  make use of the diffusion effect, we could only test the $u$-equation by $\ln u$   to derive the uniform $L^1$-boundedness of $u\ln u$, this is why we have to specify $r=1$  in the second condition of \eqref{thm-con}. While, in the case of $\tau=0$, we can study  the evolution of  a slowly  growing function  $(u+e^{[m]})\ln^{[m]}(u+e^{[m]})$ and use the damping  source to establish the uniform $L^1$-boundedness of $(u+e^{[m]})\ln^{[m]}(u+e^{[m]})$ (cf. Lemmas \ref{key-energy-id}, \ref{lemma45630223116} and \ref{lemma45630223}). Upon such improved regularity for $u$ together with a generalization of the logarithmic version of  Gagliardo-Nirenberg inequality (c.f. Lemma \ref{lemma45630jjjj223}), we derive the  $(L^2, L^4)$-boundedness of $(u, \tau\nabla v)$ (cf. Lemma \ref{lemma4ssdffff5630223}). Finally, we use the widely known smoothing $L^p$-$L^q$-estimates for the Neumann heat semigroup in the case of $\tau>0$ and the well-known $W^{2,p}$-regularity theory in the case of $\tau=0$ to conclude the ($L^\infty, W^{1,\infty},W^{1,\infty})$-boundedness of $(u, v,w)$ (c.f. the proof of  Theorem \ref{theorem3}).

\section{Preliminaries and basic results on \eqref{1.1}}

For convenience, we collect the widely used well-known Gagliardo-Nirenberg  inequality.
\begin{lemma}\label{GN-inter}(cf.  \cite{Ishida, Ladyzenskaja710,  Li445666}) Let  $\Omega\subset\mathbb{R}^n(n\geq 1)$ be a bounded smooth domain  and let  $p\geq 1$ and  $q\in (0,p)$. Then there exists a positive constant  $C_{GN}=C(p,q, n, \Omega)$ such that
 $$
 \|w\|_{L^p(\Omega)} \leq C_{GN}\Bigr(\|\nabla w\|_{L^2(\Omega)}^{\delta}\|w\|_{L^q(\Omega)}^{(1-\delta) }+\|w\|_{L^r(\Omega)}\Bigr), \quad \forall w\in H^1(\Omega)\cap L^q(\Omega),
 $$
where $r>0$ is arbitrary and  $\delta$ is given by
 $$
 \frac{1}{p}=\delta(\frac{1}{2}-\frac{1}{n})+\frac{1-\delta}{q}\Longleftrightarrow \delta=\frac{\frac{n}{q}-\frac{n}{p}}{1-\frac{n}{2}+\frac{n}{q}}\in(0,1).
 $$
\end{lemma}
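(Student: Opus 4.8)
The plan is to derive this inhomogeneous, bounded-domain inequality from the classical \emph{homogeneous} Gagliardo--Nirenberg inequality on $\mathbb{R}^n$ by means of a Sobolev extension, and then to manufacture the additive term $\|w\|_{L^r(\Omega)}$ with $r>0$ \emph{arbitrary} through elementary interpolation together with a compactness (Ehrling-type) lemma on $\Omega$. As a first bookkeeping step I solve the scaling identity for $\delta$ and record that the requirement $\delta\in(0,1)$ is equivalent to $q<p$ together with the subcriticality of both exponents; in other words, the displayed formula for $\delta$ already places all exponents appearing below in the admissible range, so that the whole-space inequality invoked next is available.

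First I invoke the whole-space homogeneous inequality
\[
\|u\|_{L^p(\mathbb{R}^n)}\le C\,\|\nabla u\|_{L^2(\mathbb{R}^n)}^{\delta}\,\|u\|_{L^q(\mathbb{R}^n)}^{1-\delta},\qquad u\in C_c^\infty(\mathbb{R}^n),
\]
with the same exponent $\delta$; for $q\ge 1$ this is Nirenberg's inequality, and for $q\in(0,1)$ it is the quasinorm version recorded in \cite{Ishida, Li445666}. Since $\Omega$ is bounded and smooth, it carries a bounded linear extension operator $E\colon W^{1,2}(\Omega)\to W^{1,2}(\mathbb{R}^n)$ with $Ew=w$ on $\Omega$, $\operatorname{supp}(Ew)$ contained in a fixed ball, and $E$ bounded on the Lebesgue scales entering the argument. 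Applying the whole-space bound to $u=Ew$ and using $\|\nabla Ew\|_{L^2(\mathbb{R}^n)}\le C\|w\|_{W^{1,2}(\Omega)}$ and $\|Ew\|_{L^q(\mathbb{R}^n)}\le C\|w\|_{L^q(\Omega)}$ yields the multiplicative full-norm form
\[
\|w\|_{L^p(\Omega)}\le C\,\|w\|_{W^{1,2}(\Omega)}^{\delta}\,\|w\|_{L^q(\Omega)}^{1-\delta}.
\]

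To reach the stated additive form I split the Sobolev norm by subadditivity of $t\mapsto t^{\delta}$ (valid since $\delta\in(0,1)$), namely $\|w\|_{W^{1,2}(\Omega)}^{\delta}\le\|\nabla w\|_{L^2(\Omega)}^{\delta}+\|w\|_{L^2(\Omega)}^{\delta}$, so that, writing $L:=\|\nabla w\|_{L^2(\Omega)}^{\delta}\|w\|_{L^q(\Omega)}^{1-\delta}$ for the desired leading term, it remains to absorb the remainder $R:=\|w\|_{L^2(\Omega)}^{\delta}\|w\|_{L^q(\Omega)}^{1-\delta}$. Since $\|w\|_{L^{r_0}(\Omega)}\le|\Omega|^{1/r_0-1/r}\|w\|_{L^r(\Omega)}$ for $r_0<r$, it suffices to treat a single small exponent $r=r_0<\min\{q,2\}$. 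For this $r_0$, Ehrling's lemma (Rellich compactness $H^1(\Omega)\hookrightarrow\hookrightarrow L^2(\Omega)$ together with $L^2(\Omega)\hookrightarrow L^{r_0}(\Omega)$) and a self-absorption give, for every $\varepsilon>0$, the bound $\|w\|_{L^2(\Omega)}\le\varepsilon\|\nabla w\|_{L^2(\Omega)}+C_\varepsilon\|w\|_{L^{r_0}(\Omega)}$; raising this to the power $\delta$, using subadditivity once more, and bounding $\|w\|_{L^q(\Omega)}$ by interpolation between $L^{r_0}$ and $L^p$ (legitimate as $r_0<q<p$) followed by Young's inequality, I obtain $R\le\varepsilon^{\delta}L+\varepsilon\|w\|_{L^p(\Omega)}+C_\varepsilon\|w\|_{L^{r_0}(\Omega)}$. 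Choosing $\varepsilon$ small and absorbing the $\varepsilon\|w\|_{L^p(\Omega)}$ into the left-hand side yields exactly
\[
\|w\|_{L^p(\Omega)}\le C\,\|\nabla w\|_{L^2(\Omega)}^{\delta}\,\|w\|_{L^q(\Omega)}^{1-\delta}+C\,\|w\|_{L^{r_0}(\Omega)},
\]
and hence the claim for every $r\ge r_0$, i.e. for all $r>0$.

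The routine ingredients here are the mapping properties of $E$ and the repeated use of H\"older's and Young's inequalities. The genuine difficulty is the quasinorm regime $q\in(0,1)$: there $\|\cdot\|_{L^q(\Omega)}$ is not a norm, the duality proof of Nirenberg's inequality breaks down, and a Sobolev extension need not act boundedly on $L^q$. I would dispose of this exactly as in \cite{Ishida, Ladyzenskaja710, Li445666}, either by a direct level-set/rearrangement proof of the whole-space quasinorm inequality, or by the substitution $u\mapsto|u|^{\sigma}$ with $\sigma q\ge1$ combined with H\"older, which reduces the case $q<1$ to one with exponent $\ge1$ while respecting the scaling relation for $\delta$; the bounded-domain passage of the two preceding paragraphs then applies verbatim.
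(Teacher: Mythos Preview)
The paper does not supply a proof of this lemma at all: it is stated with the tag ``(cf.\ \cite{Ishida, Ladyzenskaja710, Li445666})'' and immediately followed by the local-existence lemma, so the authors treat it as a known tool and defer entirely to the cited references. There is therefore no ``paper's own proof'' to compare against; your write-up goes well beyond what the paper does.

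As for the argument itself, your extension-plus-Ehrling route is a standard and essentially correct way to obtain the bounded-domain additive form from the homogeneous whole-space inequality, at least in the regime $q\ge 1$. The chain $\|w\|_{L^p}\le C\|w\|_{W^{1,2}}^{\delta}\|w\|_{L^q}^{1-\delta}$, the split of $\|w\|_{W^{1,2}}^{\delta}$ via subadditivity, the Ehrling bound on $\|w\|_{L^2}$, the $L^{r_0}$--$L^p$ interpolation of $\|w\|_{L^q}$, and the Young/self-absorption step all work as you describe. One small point worth tightening: the claim ``$\|Ew\|_{L^q(\mathbb{R}^n)}\le C\|w\|_{L^q(\Omega)}$'' is not automatic for a generic Sobolev extension when $q<1$, and you rightly flag this; however, the fix you sketch (the substitution $u\mapsto|u|^{\sigma}$ with $\sigma q\ge 1$) is exactly the device used in \cite{Li445666}, so pointing there is appropriate. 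In short, your proposal is sound and more detailed than anything the paper provides; just be aware that the quasinorm case is where the real content lies, and your final paragraph is carrying most of the weight there.
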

 The  local  solvability and extendibility   of classical solutions to  the chemotaxis-hapotataxis  system \eqref{1.1} is quite standard; see analogous discussions in  \cite{MR08, Tao72, Tello710, Winkler37103, liujinijjkk1}.
\begin{lemma}\label{lemma70}
Let $\chi,\xi>0$, $\tau\geq0$, $\Omega\subset\mathbb{R}^n$ be a bounded and smooth   domain,  the initial data $(u_0, \tau v_0,w_0)$ satisfy \eqref{x1.731426677gg} and  let the locally bounded source $f$  satisfy $f (0,w)\geq0$. Then there exists a maximal existence time $T_m\in(0,\infty]$ and a unique triple $(u,v,w)$ of functions from $C^0(\bar{\Omega}\times[0,T_m))\cap C^{2,1}(\bar{\Omega}\times(0,T_m))$ solving \dref{1.1}  classically in $\Omega\times(0,T_m)$ and such that
\begin{equation}\label{uvw-nonegative}
0< u, \ \ \ 0< v, \ \ \ 0 \leq w\leq \|w_0\|_{L^\infty(\Omega)}.
\end{equation}
Moreover, we have the following extendibility alternatives:
\begin{equation}
\text{either } T_m=+\infty \text{ or  }\limsup_{t\rightarrow T_m-}\left(\|u(\cdot, t)\|_{L^\infty(\Omega)}+\|v(\cdot,t)\|_{W^{1,\infty}(\Omega)}\right)=+\infty.
\label{1.163072x}
\end{equation}
\end{lemma}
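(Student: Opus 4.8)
The plan is to dispose of the ODE component $w$ through its explicit representation, thereby reducing \eqref{1.1} to a two-component chemotaxis-type problem for $(u,v)$, and then to run a Banach fixed-point argument on a short time interval. Since the third equation $w_t=-vw$ is, for each fixed $x$, a linear ODE in $t$, any solution necessarily satisfies
\begin{equation}
w(x,t)=w_0(x)\exp\left(-\int_0^t v(x,s)\,ds\right)=:\mathcal{W}[v](x,t).
\label{w-rep}
\end{equation}
This yields at once the pointwise bounds $0\le w\le\|w_0\|_{L^\infty(\Omega)}$ (once $v>0$ is established), and it lets me substitute $w=\mathcal{W}[v]$ into the first equation, so that the haptotaxis term $-\xi\nabla\cdot(u\nabla w)$ becomes a nonlocal-in-time drift whose spatial gradient $\nabla w=\bigl(\nabla w_0-w_0\int_0^t\nabla v\,ds\bigr)e^{-\int_0^t v}$ is controlled by the regularity $w_0\in C^{2+\vartheta}(\bar\Omega)$ assumed in \eqref{x1.731426677gg} together with the spatial regularity of $v$.

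Next I would set up the contraction. Fixing $q>n$ and a small $T>0$, I work in the space $X_T=C^0([0,T];C^0(\bar\Omega))\times C^0([0,T];W^{1,q}(\Omega))$ and define a map $\Phi$ sending $(\bar u,\bar v)$ to $(u,v)$, where $\bar w=\mathcal{W}[\bar v]$ is computed from \eqref{w-rep}, and $u,v$ solve the linear problems obtained by freezing the nonlinear coefficients at $(\bar u,\bar v,\bar w)$: the $u$-equation is solved by the variation-of-constants formula for the Neumann heat semigroup $(e^{t\Delta})_{t\ge0}$, and the $v$-equation by the same semigroup when $\tau>0$, or by the resolvent $(I-\Delta)^{-1}$ together with elliptic regularity when $\tau=0$. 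The standard $L^p$--$L^q$ smoothing estimates for $e^{t\Delta}$ (a gradient gain of order $t^{-1/2}$ with an integrable singularity in $t$) show that $\Phi$ maps a closed ball of $X_T$ into itself and is a contraction provided $T$ is small, where the local boundedness and (implicit) local Lipschitz regularity of $f$ supply the needed Lipschitz control of the reaction term on bounded sets. The unique fixed point is the local mild solution; parabolic Schauder theory then upgrades it to a classical solution in $C^0(\bar\Omega\times[0,T_m))\cap C^{2,1}(\bar\Omega\times(0,T_m))$, and patching the maximal family of such intervals defines $T_m$ and delivers uniqueness.

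Positivity then follows from comparison and maximum principles. The representation \eqref{w-rep} gives $0\le w\le\|w_0\|_{L^\infty(\Omega)}$ directly. Since $u\ge0$, the $v$-equation has the nonnegative source $u$, so the (elliptic, when $\tau=0$) strong maximum principle forces $v>0$ in $\Omega\times(0,T_m)$. Writing the $u$-equation in nondivergence form, $u_t=\Delta u-\chi\nabla v\cdot\nabla u-\xi\nabla w\cdot\nabla u+c\,u$ with $c=-\chi\Delta v-\xi\Delta w+f(u,w)/u$, the hypothesis $f(0,w)\ge0$ makes $u\equiv0$ a subsolution, so $u\ge0$ by comparison (using $u_0\ge0$, $u_0\not\equiv0$); since $u$ is a classical solution of a linear parabolic equation with continuous coefficients on $(0,T_m)$, the strong maximum principle then yields $u>0$.

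Finally, the extensibility alternative \eqref{1.163072x} is the continuation step, and this is where the main work lies. Suppose $T_m<\infty$ yet $\|u(\cdot,t)\|_{L^\infty(\Omega)}+\|v(\cdot,t)\|_{W^{1,\infty}(\Omega)}\le K$ for all $t<T_m$; the task is to convert this bound into uniform higher regularity up to $t=T_m$, so that the local existence step can be restarted at times near $T_m$ with a time step bounded below, contradicting maximality. Through \eqref{w-rep} the $W^{1,\infty}$-bound on $v$ controls $w$ in $W^{1,\infty}$, whence the chemotaxis and haptotaxis drifts have bounded coefficients; parabolic $L^p$ estimates followed by Hölder ($C^{\vartheta,\vartheta/2}$) estimates, applied successively to the $u$- and $v$-equations, then furnish uniform Hölder bounds on $(u,v,w)$ up to $t=T_m$, i.e. exactly the uniform initial regularity needed to re-enter the fixed-point argument. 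The delicate point is precisely this bootstrap: the cross-diffusion structure forces the coefficients of the linear problems to depend on gradients of the very functions being estimated, so the Hölder estimates must be extracted in the right order—first gaining integrability for $u$, then Schauder regularity for $v$ (and thereby for $w$ and for the drift coefficients), and only then full Schauder regularity for $u$.
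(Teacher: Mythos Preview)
Your sketch is essentially correct and follows the standard route; note, however, that the paper does not actually \emph{prove} this lemma at all---it simply declares the local solvability and extensibility ``quite standard'' and cites \cite{MR08, Tao72, Tello710, Winkler37103, liujinijjkk1}, so there is no paper-side argument to compare against beyond those references, whose content your outline faithfully reproduces (ODE reduction for $w$, semigroup-based fixed point for $(u,v)$, comparison/strong maximum principle for signs, and continuation via bootstrap).

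Two small points worth tightening. First, your positivity paragraph has a mild circularity: you invoke $u\ge0$ to get $v>0$ before you have established $u\ge0$; the clean order is to show $u\ge0$ first (the comparison argument you give for $u$ does not require any sign information on $v$ or $w$, only that the drift coefficients $\nabla v,\nabla w$ are continuous and that $f(0,w)\ge0$), then $v>0$, then the $w$-bounds from \eqref{w-rep}, and finally $u>0$ by the strong maximum principle. Second, you correctly flag that the contraction step needs local Lipschitz continuity of $f$, which the lemma's hypotheses only state as ``locally bounded''; this is an implicit smoothness assumption the paper (and its cited references) take for granted, so it is fine to invoke it, but it is worth saying so explicitly rather than parenthetically.
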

Henceforth, we assume that all the conditions in Lemma \ref{lemma70} and Theorem \ref{theorem3} are satisfied. $C$, $C_i$ (numbering within lemmas or theorems) and $C_\epsilon$ etc will denote some generic constants which may vary line-by-line. Also, the integration variable will be omitted. Now,  we start with the $(L^1, L^2)$-boundedness of $(u,v)$, which is collected in the following lemma:
\begin{lemma}\label{ul1-bdd} Under \eqref{f-source}, the $(L^1, L^2)$-norm of $(u,v)$ is  uniformly bounded according to
\begin{equation}\label{ul1-est-sub}
\| u(\cdot,t)\|_{L^1(\Omega)}\leq M_1, \quad \forall t\in(0,T_m),
\end{equation}
where $M_1$ defined by \eqref{M-def}, and, there exists $C=C(u_0,\tau v_0, w_0, |\Omega|, f)>0$ such that
\begin{equation}\label{vl2-est-sub}
\| v(\cdot,t)\|_{L^2(\Omega)}\leq C, \quad \forall t\in(0,T_m).
\end{equation}
\end{lemma}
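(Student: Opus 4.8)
The plan is to establish the two bounds in Lemma \ref{ul1-bdd} by classical mass-control arguments. For \eqref{ul1-est-sub}, I would integrate the $u$-equation in \eqref{1.1} over $\Omega$; since the no-flux boundary condition kills the flux terms $\Delta u-\chi\nabla\cdot(u\nabla v)-\xi\nabla\cdot(u\nabla w)$, we obtain $\frac{d}{dt}\int_\Omega u=\int_\Omega f(u,w)$. If $f\equiv0$ then $\int_\Omega u$ is conserved and equals $\|u_0\|_{L^1(\Omega)}\leq M_1$. If $f\not\equiv0$, then using \eqref{f-source}, for any $\eta\in(0,b]$ we have $f(s,w)\leq(f(s,w)+\eta s)-\eta s\leq \sup\{f(s,w)+\eta s\}-\eta s$ on the relevant range (recalling $0\leq w\leq\|w_0\|_{L^\infty(\Omega)}$ from \eqref{uvw-nonegative}), so $\frac{d}{dt}\int_\Omega u\leq |\Omega|\sup\{f(s,w)+\eta s\}-\eta\int_\Omega u$. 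An ODI comparison then yields $\int_\Omega u(\cdot,t)\leq\max\{\|u_0\|_{L^1(\Omega)},\;\frac{|\Omega|}{\eta}\sup\{f(s,w)+\eta s\}\}$; optimizing over $\eta\in(0,b]$ and absorbing $\|u_0\|_{L^1(\Omega)}$ into the sum gives precisely the bound $M_1$ defined in \eqref{M-def}. (Strictly, one gets the bound with a $\max$; it is dominated by the sum in \eqref{M-def}, and one should note $\sup\{f(s,w)+\eta s\}\geq f(1,w)+\eta>0$ is positive so $M_1$ is finite and the infimum is meaningful.)

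For \eqref{vl2-est-sub} I would distinguish the two cases $\tau=0$ and $\tau>0$. When $\tau=0$, the second equation reads $-\Delta v+v=u$, so standard elliptic regularity (or testing by $v$ and using the Poincaré–Sobolev inequality in 2D together with $\|u\|_{L^1}\leq M_1$) controls $\|v\|_{L^2(\Omega)}$; actually elliptic $W^{2,p}$ theory does not directly give $L^2$ from $L^1$ data in a one-step way, so more cleanly I would test $-\Delta v+v=u$ by $v$ to get $\int_\Omega|\nabla v|^2+\int_\Omega v^2=\int_\Omega uv\leq\|u\|_{L^1}\|v\|_{L^\infty}$ and then invoke that $\|v\|_{L^\infty}$ is controlled — but to avoid circularity I would instead use the representation via the Neumann resolvent / heat semigroup as in the parabolic case. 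When $\tau>0$, I would test the $v$-equation by $v$: $\frac12\frac{d}{dt}\int_\Omega v^2+\int_\Omega|\nabla v|^2+\int_\Omega v^2=\int_\Omega uv$. Bounding $\int_\Omega uv$ is the crux: by Gagliardo–Nirenberg (Lemma \ref{GN-inter}) in 2D, $\|v\|_{L^2}^2\leq C(\|\nabla v\|_{L^2}\|v\|_{L^1}+\|v\|_{L^1}^2)$, but $\|v\|_{L^1}$ itself needs control — which follows by integrating the $v$-equation and using $\|u\|_{L^1}\leq M_1$, giving $\frac{d}{dt}\int_\Omega v\leq M_1-\int_\Omega v$, hence $\int_\Omega v\leq\max\{\int_\Omega v_0,M_1\}=:C_0$. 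Then $\int_\Omega uv$ can be estimated via $\int_\Omega uv\le\|u\|_{L^1}\|v\|_{L^\infty}$ is too strong; instead use that in 2D, by the smoothing $L^p$–$L^q$ estimates for the Neumann heat semigroup applied to the Duhamel formula $v(t)=e^{t(\Delta-1)}v_0+\int_0^t e^{(t-s)(\Delta-1)}u(s)\,ds$, one gets $\|v(t)\|_{L^2(\Omega)}\leq\|e^{t(\Delta-1)}v_0\|_{L^2}+\int_0^t C(1+(t-s)^{-1/2})e^{-(t-s)}\|u(s)\|_{L^1}\,ds\leq C(\|v_0\|_{L^2}+M_1)$, since $\int_0^\infty C(1+\sigma^{-1/2})e^{-\sigma}\,d\sigma<\infty$. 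This is the cleanest route and works uniformly for $\tau>0$; for $\tau=0$ the analogous (stationary) resolvent estimate $\|(-\Delta+1)^{-1}\|_{L^1\to L^2}<\infty$ in 2D gives $\|v\|_{L^2}\leq C\|u\|_{L^1}\leq CM_1$ directly.

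The main obstacle is purely the $L^1\to L^2$ smoothing for $v$ from $L^1$-bounded source $u$ in dimension $n=2$: this is exactly the borderline case where the naive energy estimate $\int uv\le\|u\|_{L^1}\|v\|_{L^\infty}$ is circular and where one must use the singular-kernel estimate $\|e^{t\Delta}\phi\|_{L^2}\lesssim t^{-n/2(1-1/2)}\|\phi\|_{L^1}=t^{-1/2}\|\phi\|_{L^1}$ whose time singularity $t^{-1/2}$ is integrable near $0$ precisely because $n/4<1$. I would therefore present the semigroup argument carefully, noting the time-integrability of $(1+\sigma^{-1/2})e^{-\sigma}$, and for $\tau=0$ cite the corresponding elliptic fact. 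All remaining manipulations — the ODI comparison for $\int_\Omega u$ and $\int_\Omega v$, and the optimization over $\eta$ — are routine. The constant in \eqref{vl2-est-sub} depends only on $u_0$ through $\|u_0\|_{L^1}$ (via $M_1$), on $\tau v_0$ through $\|v_0\|_{L^2}$, on $w_0$ through $\|w_0\|_{L^\infty}$ (entering the sup defining $M_1$), on $|\Omega|$ and on $f$, as asserted.
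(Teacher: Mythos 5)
Your proposal is correct and follows essentially the same route as the paper: an ODI for $\int_\Omega u$ obtained by integrating the $u$-equation and optimizing over $\eta\in(0,b]$, followed by elliptic regularity (case $\tau=0$) or the Duhamel representation with Neumann heat-semigroup smoothing (case $\tau>0$) for the $L^2$-bound of $v$. The only cosmetic difference is that the paper passes through the $L^1\to W^{1,1}$ estimate and the 2D embedding $W^{1,1}(\Omega)\hookrightarrow L^2(\Omega)$, whereas you use the direct $L^1\to L^2$ smoothing with its integrable $t^{-1/2}$ singularity; both are standard and equivalent here.
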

\begin{proof}We only show the proof for the case of $f\not\equiv0$. Integrating the $u$-equation in \eqref{1.1} and using the no flux boundary conditions and \eqref{f-source},  we obtain an ordinary  differential inequality (ODI), for any $\eta\in (0,b]$ and for any $t\in(0, T_m)$, that
\begin{equation*}
\frac{d}{dt}\int_\Omega u= \int_\Omega f(u,w)\leq -\eta \int_\Omega u+M_\eta|\Omega|,
\end{equation*}
which trivially yields
$$
\int_\Omega u\leq \int_\Omega u_0+\frac{M_\eta}{\eta}|\Omega|.
$$
Upon taking infimum over $\eta\in (0, b]$ and recalling the definition of $M_1$ in \eqref{M-def},  we infer the $L^1$-bound of $u$ as stated in \eqref{ul1-est-sub}. Here, since $\eta\in(0,b]$ it follows from  \eqref{f-source} and \eqref{uvw-nonegative} that
$$
M_\eta=\sup\Bigr\{ f(s,w)+\eta s:\  (s,w)\in(0,\infty)\times(0,\max_{x\in\bar{\Omega}}w_0(x))\Bigr\}<\infty.
$$
Now, since $\|u\|_{L^1}$ is bounded, the $L^1$-boundedness of $v$ follows from
$$
\tau\frac{d}{dt}\int_\Omega v+\int_\Omega v=\int_\Omega u\leq M_1.
$$
 When $\tau=0$, the $L^1$-boundedness of $u$ and the elliptic estimate applied to the $v$-equation yield easily  the $L^2$-boundedness of  $v$.  When $\tau>0$, we rewrite  the $v$-equation in \dref{1.1} as
  $$
  v(t)=e^{\frac{t}{\tau}(\Delta-1)}v_0+\frac{1}{\tau}\int_0^t e^{\frac{(t-s)}{\tau}(\Delta-1)}u(s)ds,
  $$
  and then use the quite known smoothing $L^p$-$L^q$-estimates for the Neumann heat semigroup  $\{e^{t\Delta}\}_{t\geq0}$ in $\Omega$ (cf.   \cite{Cao15, Horstmann791, Winkler792}) to derive the following reciprocal estimate (cf., \cite{JX16, Li445666, Xiang15-JDE, Xiang18-SIAP})
  $$
  \|v(\cdot,t)\|_{W^{1,1}(\Omega)}\leq C_1\left(1+\sup_{s\in(0,t)}\|u(\cdot,s)\|_{L^1(\Omega)}\right)\leq C_2,
  $$
  which  gives the $L^2$-boundedness of $v$ by the embedding $W^{1,1}(\Omega)\hookrightarrow L^2(\Omega)$  for $\Omega\subset\mathbb{R}^2$.
\end{proof}
It follows from the ODE of $w$ in \eqref{1.1} that $w(x,t)=w_0(x)e^{-\int_0^tv(x,s)ds}$. Then  repeating the argument in \cite[Lemma 2.3]{Taoss} and noting our assumption on $w_0$ in \eqref{x1.731426677gg} that $|\nabla w_0|^2\leq Aw_0$ on $\Omega$,  we obtain a one-sided pointwise estimate for $-\Delta w$ as follows.
\begin{lemma} The local-in-time solution  $(u, v,w)$ of \dref{1.1} fulfills
 \begin{equation}\label{x1.731426677gghh}
 \begin{array}{rl}
-\Delta w(x, t) \leq\tau \|w_0\|_{L^\infty(\Omega)}\cdot v(x,t)+\kappa,  \quad \quad \forall (x,t)\in\Omega\times (0, T_m),
\end{array}
\end{equation}
where
$$
\kappa:=\|\Delta w_0\|_{L^\infty(\Omega)}+4\|\nabla\sqrt{w_0}\|_{L^\infty(\Omega\cap\{w_0>0\})}^2
+\frac{\|w_0\|_{L^\infty(\Omega)}}{e}\leq \|\Delta w_0\|_{L^\infty(\Omega)}+4A
+\frac{\|w_0\|_{L^\infty(\Omega)}}{e}.
$$
\end{lemma}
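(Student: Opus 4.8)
\emph{Proof proposal.} The natural starting point is the representation formula obtained by integrating the ODE $w_t=-vw$ pointwise in $x$, namely
\[
w(x,t)=w_0(x)\,e^{-\phi(x,t)},\qquad \phi(x,t):=\int_0^t v(x,s)\,ds\ \ge\ 0 .
\]
The plan is to evaluate $-\Delta w$ through the logarithmic substitution $w=e^{g}$ with $g:=\ln w_0-\phi$, carrying out the computation first on the open set $\{w_0>0\}$, where $g$ is meaningful, and then recovering the general case $w_0\ge0$ by the approximation $w_0\rightsquigarrow w_0+\varepsilon$ with $\varepsilon\downarrow0$, exactly in the spirit of \cite[Lemma 2.3]{Taoss}.

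On $\{w_0>0\}$ one has the elementary identity $\Delta w=w\,(\Delta g+|\nabla g|^2)$, so that, discarding the manifestly nonpositive term $-w|\nabla g|^2$,
\[
-\Delta w\ \le\ -w\,\Delta g\ =\ -w\,\Delta\ln w_0+w\,\Delta\phi .
\]
For the term $w\,\Delta\phi$ I would differentiate under the integral sign and substitute $\Delta v=\tau v_t-u+v$ from the $v$-equation of \eqref{1.1}, which gives
\[
\Delta\phi(x,t)=\tau v(x,t)-\tau v_0(x)-\int_0^t u(x,s)\,ds+\phi(x,t);
\]
since $0\le w\le\|w_0\|_{L^\infty(\Omega)}$, $w v_0\ge0$, $w\int_0^t u\ge0$ and $w\phi=w_0\phi e^{-\phi}\le\|w_0\|_{L^\infty(\Omega)}\,\sup_{s\ge0}(se^{-s})=\|w_0\|_{L^\infty(\Omega)}/e$, this yields $w\,\Delta\phi\le\tau\|w_0\|_{L^\infty(\Omega)}\,v(x,t)+\|w_0\|_{L^\infty(\Omega)}/e$. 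For the term $-w\,\Delta\ln w_0$ I would expand $\Delta\ln w_0=\Delta w_0/w_0-|\nabla w_0|^2/w_0^2$, use $w/w_0=e^{-\phi}\le1$, and recognise that $|\nabla w_0|^2/w_0=4|\nabla\sqrt{w_0}|^2$, obtaining $-w\,\Delta\ln w_0\le\|\Delta w_0\|_{L^\infty(\Omega)}+4\|\nabla\sqrt{w_0}\|_{L^\infty(\Omega\cap\{w_0>0\})}^2$. Adding the two contributions reproduces exactly \eqref{x1.731426677gghh}, and the coarser bound for $\kappa$ is then immediate from the hypothesis $|\nabla w_0|^2\le A w_0$ in \eqref{x1.731426677gg}, which forces $4|\nabla\sqrt{w_0}|^2\le A$.

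The one genuinely delicate point, and the reason the above must be run as an approximation, is that $\ln w_0$ is undefined on $\{w_0=0\}$. I would therefore apply the displayed computation to $w_{0,\varepsilon}:=w_0+\varepsilon$ in place of $w_0$, keeping the \emph{same} $\phi$ (built from the actual solution $v$); writing $w=w_{0,\varepsilon}e^{-\phi}-\varepsilon e^{-\phi}$ one gets the $\varepsilon$-analogue of \eqref{x1.731426677gghh} for $w_{0,\varepsilon}e^{-\phi}$ with constant $\kappa_\varepsilon=\|\Delta w_0\|_{L^\infty(\Omega)}+4\|\nabla\sqrt{w_0+\varepsilon}\|_{L^\infty(\Omega)}^2+\|w_0+\varepsilon\|_{L^\infty(\Omega)}/e$, and one then lets $\varepsilon\downarrow0$ using $\|\nabla\sqrt{w_0+\varepsilon}\|_{L^\infty(\Omega)}^2\le\|\nabla\sqrt{w_0}\|_{L^\infty(\Omega\cap\{w_0>0\})}^2$ (finite precisely because of $|\nabla w_0|^2\le A w_0$) together with the pointwise convergence $\varepsilon\,\Delta(e^{-\phi(x,t)})\to0$ at each fixed $(x,t)$. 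The differentiation of $\phi$ under the integral sign is routine since $v\in C^{2,1}(\bar\Omega\times(0,T_m))$ for $t<T_m$ and $v$ is continuous up to $t=0$; everything else is bookkeeping of signs.
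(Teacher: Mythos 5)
Your argument is correct and is precisely the one the paper invokes: the paper gives no independent proof but simply refers to repeating \cite[Lemma 2.3]{Taoss}, which is exactly the representation $w=w_0e^{-\phi}$, the logarithmic substitution discarding $-w|\nabla g|^2$, the replacement of $\Delta v$ via the $v$-equation, and the bound $w\phi\le \|w_0\|_{L^\infty(\Omega)}/e$ that you carry out, with the hypothesis $|\nabla w_0|^2\le Aw_0$ entering exactly where you use it. The $\varepsilon$-regularization of $w_0$ to handle $\{w_0=0\}$ is likewise the standard device from that reference, so no further changes are needed.
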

\section{Bootstrap argument and the proof of Theorem \ref{theorem3}}
\subsection{Bootstrap procedure for improving $L^1$ of $u$ }
In this subsection, we aim to to improve  the starting $L^1$-boundedness of $u$. Our key idea to this end relies on the following  dissipation identity.
\begin{lemma}
\label{key-energy-id}
Let $h:(0,+\infty)\rightarrow \mathbb{R}$ be $C^2$-smooth and let $k\geq 0$. Then  the  unique local-in-time solution of \dref{1.1} satisfies, for $t\in(0, T_m)$,
\begin{equation}\label{key-id}
\begin{split}
&\frac{d}{dt}\int_\Omega (u+k)h(u+k)+\int_\Omega \left( 2h^{'}(u+k)+(u+k)h^{''}(u+k) \right)|\nabla u|^2\\
&=\chi\int_\Omega \nabla\left( u(u+k)h^{'}(u+k)-k[h(u+k)-h(k)] \right) \nabla v\\
&\ \  +\xi\int_\Omega\nabla\left( u(u+k)h^{'}(u+k)-k[h(u+k)-h(k)] \right) \nabla w\\
&\  \  +\int_\Omega \left(h(u+k)+(u+k)h^{'}(u+k)\right)f(u,w)\\
&=-\chi\int_\Omega \left( u(u+k)h^{'}(u+k)-k[h(u+k)-h(k)] \right)\Delta v\\
&\ \ -\xi\int_\Omega \left( u(u+k)h^{'}(u+k)-k[h(u+k)-h(k)] \right) \Delta w\\
&\   \ +\int_\Omega \left(h(u+k)+(u+k)h^{'}(u+k)\right)f(u,w)\\
&=\chi\int_\Omega \left( u(u+k)h^{'}(u+k)-k[h(u+k)-h(k)] \right)\left(u-v-\tau v_t\right)\\
&\ \  -\xi\int_\Omega \left( u(u+k)h^{'}(u+k)-k[h(u+k)-h(k)] \right)\Delta w\\
&\ \ +\int_\Omega \left(h(u+k)+(u+k)h^{'}(u+k)\right)f(u,w).
\end{split}
\end{equation}
In particular, formally setting $h(u)=\ln u$, we have
\begin{equation}\label{key-id-ulnu}
\begin{split}
&\frac{d}{dt}\int_\Omega u\ln u+4\int_\Omega |\nabla u^\frac{1}{2}|^2\\
&=\chi\int_\Omega \nabla u \nabla v+\xi\int_\Omega\nabla u   \nabla w +\int_\Omega \left(\ln u+1\right)f(u,w)\\
&=-\chi\int_\Omega   u \Delta v-\xi\int_\Omega  u  \Delta  w +\int_\Omega \left(\ln u+1\right)f(u,w)\\
&=\chi\int_\Omega u\left(u-v-\tau v_t\right) -\xi\int_\Omega u\Delta w +\int_\Omega \left(\ln u+1\right)f(u,w).
\end{split}
\end{equation}
\end{lemma}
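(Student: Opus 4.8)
The plan is to derive \eqref{key-id} by testing the first equation of \eqref{1.1} against a suitable nonlinearity of $u$ and then integrating by parts twice. Fix $k\ge 0$ and set
\[
G(s):=h(s+k)+(s+k)h'(s+k)=\frac{d}{ds}\bigl[(s+k)h(s+k)\bigr],\qquad s>0.
\]
On any subinterval $[t_1,t_2]\subset(0,T_m)$ the component $u$ is continuous and, by the strict positivity in \eqref{uvw-nonegative}, bounded away from $0$; hence $u$ takes values in a compact subset of $(0,\infty)$ there, $h,h',h''$ are bounded on that set, $u_t$ is continuous, and we may differentiate under the integral sign to get $\frac{d}{dt}\int_\Omega (u+k)h(u+k)=\int_\Omega G(u)u_t$. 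Substituting $u_t=\nabla\cdot\bigl(\nabla u-\chi u\nabla v-\xi u\nabla w\bigr)+f(u,w)$ and integrating by parts, the boundary integral $\int_{\partial\Omega}G(u)\bigl(\nabla u-\chi u\nabla v-\xi u\nabla w\bigr)\cdot\nu$ vanishes: $\partial_\nu v=0$ is imposed in \eqref{1.1}, $\partial_\nu w=0$ is inherited from the representation $w=w_0e^{-\int_0^t v}$ together with $\partial_\nu w_0=0$ in \eqref{x1.731426677gg}, and then the flux condition in \eqref{1.1} forces $\partial_\nu u=0$. This leaves the identity with dissipation term $\int_\Omega G'(u)|\nabla u|^2$ on the left and $\chi\int_\Omega uG'(u)\nabla u\cdot\nabla v+\xi\int_\Omega uG'(u)\nabla u\cdot\nabla w+\int_\Omega G(u)f(u,w)$ on the right; since $G'(s)=2h'(s+k)+(s+k)h''(s+k)$ and $G(s)=h(s+k)+(s+k)h'(s+k)$, the dissipation and kinetic terms already appear exactly as in \eqref{key-id}.

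To recover the first equality in \eqref{key-id} it then remains to write $uG'(u)\nabla u=\nabla\Phi(u)$ with
\[
\Phi(s):=s(s+k)h'(s+k)-k\bigl[h(s+k)-h(k)\bigr],
\]
normalized so that $\Phi(0)=0$. A direct differentiation gives $\Phi'(s)=2s\,h'(s+k)+s(s+k)h''(s+k)=s\,G'(s)$, so $\chi\int_\Omega uG'(u)\nabla u\cdot\nabla v=\chi\int_\Omega\nabla\Phi(u)\cdot\nabla v$ and likewise for the $\xi$--$w$ term. This choice-and-check of $\Phi$ is the only genuinely computational step; everything else is bookkeeping.

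For the second equality I would integrate by parts once more in the two taxis integrals, the boundary terms vanishing again because $\partial_\nu v=\partial_\nu w=0$; this yields $\chi\int_\Omega\nabla\Phi(u)\cdot\nabla v=-\chi\int_\Omega\Phi(u)\Delta v$ and $\xi\int_\Omega\nabla\Phi(u)\cdot\nabla w=-\xi\int_\Omega\Phi(u)\Delta w$ (meaningful since $w$ is $C^{2,1}$). The third equality then follows at once by inserting $-\Delta v=u-v-\tau v_t$ from the second equation of \eqref{1.1}, which turns $-\chi\int_\Omega\Phi(u)\Delta v$ into $\chi\int_\Omega\Phi(u)(u-v-\tau v_t)$, while the haptotaxis integral $-\xi\int_\Omega\Phi(u)\Delta w$ and the kinetic integral stay untouched.

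Finally, \eqref{key-id-ulnu} is just the case $h(s)=\ln s$, $k=0$: here $(u+k)h(u+k)=u\ln u$, $G(u)=\ln u+1$, $G'(u)=\frac2u-\frac1u=\frac1u$ so $\int_\Omega G'(u)|\nabla u|^2=\int_\Omega\frac{|\nabla u|^2}{u}=4\int_\Omega|\nabla u^{1/2}|^2$, and $\Phi(u)=u^2\cdot u^{-1}=u$, which reproduces $\chi\int_\Omega\nabla u\cdot\nabla v$, $-\chi\int_\Omega u\Delta v$ and $\chi\int_\Omega u(u-v-\tau v_t)$ verbatim. The word ``formally'' in the statement refers solely to the unboundedness of $\ln$ near $0$ and $\infty$, which is immaterial here since $u$ ranges in a compact subset of $(0,\infty)$ on each $[t_1,t_2]\subset(0,T_m)$; one performs every computation on such subintervals, which exhaust $(0,T_m)$. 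I do not anticipate a real obstacle: beyond verifying $\Phi'=sG'$, the only care required is to track the two integrations by parts and to confirm that every boundary term vanishes.
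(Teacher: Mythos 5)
Your proposal is correct and follows essentially the same route as the paper: test the $u$-equation with $G(u)=\frac{d}{ds}[(s+k)h(s+k)]\big|_{s=u}$, integrate by parts using the zero-flux boundary conditions, identify $uG'(u)\nabla u=\nabla\Phi(u)$ with $\Phi(s)=s(s+k)h'(s+k)-k[h(s+k)-h(k)]$, integrate by parts once more, and substitute $-\Delta v=u-v-\tau v_t$. The explicit verification $\Phi'(s)=sG'(s)$ and the remarks on boundary terms and differentiating under the integral are exactly the (implicit) content of the paper's computation.
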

\begin{proof}
Using the no flux boundary conditions and the equations in \dref{1.1}, we calculate that
\begin{align*}
&\frac{d}{dt}\int_\Omega (u+k)h(u+k)=\int_\Omega \left(h(u+k)+(u+k)h^{'}(u+k)\right)u_t\\
&=\int_\Omega\left(h(u+k)+(u+k)h^{'}(u+k)\right)\nabla\cdot\left(\nabla u-\chi u\nabla v-\xi u\nabla w\right)\\
&\ \  +\int_\Omega \left(h(u+k)+(u+k)h^{'}(u+k)\right)f(u,w)\\
&=-\int_\Omega \left( 2h^{'}(u+k)+(u+k)h^{''}(u+k) \right)|\nabla u|^2\\
&\ \ \ +\chi\int_\Omega \left( 2h^{'}(u+k)+(u+k)h^{''}(u+k) \right)u\nabla u\nabla v\\
&\ \ +\xi\int_\Omega \left( 2h^{'}(u+k)+(u+k)h^{''}(u+k) \right)u\nabla u\nabla w\\
&\ \ \ +\int_\Omega \left(h(u+k)+(u+k)h^{'}(u+k)\right)f(u,w)
\end{align*}
and that
\begin{align*}
&\chi\int_\Omega \left( 2h^{'}(u+k)+(u+k)h^{''}(u+k) \right)u\nabla u\nabla v\\
&+\xi\int_\Omega \left( 2h^{'}(u+k)+(u+k)h^{''}(u+k) \right)u\nabla u\nabla w\\
&=\chi\int_\Omega \nabla\left( u(u+k)h^{'}(u+k)-k[h(u+k)-h(k)] \right) \nabla v\\
&\ +\xi\int_\Omega\nabla\left( u(u+k)h^{'}(u+k)-k[h(u+k)-h(k)] \right) \nabla w\\
&=-\chi\int_\Omega \left( u(u+k)h^{'}(u+k)-k[h(u+k)-h(k)] \right)\Delta v\\
&\  -\xi\int_\Omega \left( u(u+k)h^{'}(u+k)-k[h(u+k)-h(k)] \right) \Delta w\\
&=\chi\int_\Omega \left( u(u+k)h^{'}(u+k)-k[h(u+k)-h(k)] \right)\left(u-v-\tau v_t\right)\\
&\ -\xi\int_\Omega \left( u(u+k)h^{'}(u+k)-k[h(u+k)-h(k)] \right) \Delta w.
\end{align*}
Combining these two identities, we arrive at \dref{key-id}.
\end{proof}

 Based on the starting $L^1$-boundedness of $u$, in 2D framework, the next common step is to establish the $L^1$-boundedness of $u\ln u$, a common choice in the literature (\cite{Osakix391, Taoss, Xiang15-JDE, Xiang18-JMP})  for such purpose is based on \dref{key-id-ulnu} via  $h(z)=\ln z$ and $k=0$, which readily entails
$$
\lim_{z\to +\infty} h(z)=+\infty, \quad \quad   2h^{'}(z+k)+zh^{''}(z+k)=\frac{1}{z}
$$
so that the diffusion-induced good terms help one to control  taxis-induced bad terms in \dref{key-id-ulnu}. Here, we shall first  choose a $C^2$-smooth test function $h$  growing slower than  $\ln z$ with the properties that
$$
\lim_{z\to +\infty} h(z)=+\infty, \quad \quad   2h^{'}(z+k)+zh^{''}(z+k)\geq 0, \ \   \forall z>0
$$
so that diffusion is harmless and then we use the damping term $f$ to control the taxis-induced bad terms. The following computation is made out of this purpose.
\begin{lemma}\label{lemma45630223116}
For integer  $m\geq 1$, we have, for any $z>0$, that
 \begin{equation}
\label{hmprime}
\left(\ln^{[m]}(z+e^{[m]})\right)^{'}=\left(\prod_{i=0}^{m-1}\ln^{[i]}(z+e^{[m]})\right)^{-1}>0 \end{equation}
and
 \begin{equation}
\label{hmmprime}
\begin{split}
 &2\left(\ln^{[m]}(z+e^{[m]})\right)^{'}+(z+e_m)\left(\ln^{[m]}(z+e^{[m]})\right)^{''}\\
 &=\left(\prod_{i=0}^{m-1}\ln^{[i]}(z+e^{[m]})\right)^{-1}\left(1-\sum_{k=1}^{m-1}
 \prod_{i=1}^{k}\left(\ln^{[i]}(z+e^{[m]})\right)^{-1}\right)>0.
 \end{split}  \end{equation}
\end{lemma}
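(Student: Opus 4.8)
The plan is to prove both identities \eqref{hmprime} and \eqref{hmmprime} by straightforward (if somewhat delicate) differentiation, organized around an induction on $m$. For the first derivative formula \eqref{hmprime}, I would argue by induction: for $m=1$ we have $\left(\ln(z+e)\right)' = (z+e)^{-1} = \left(\ln^{[0]}(z+e)\right)^{-1}$, matching the empty-product convention appropriately shifted; then for the inductive step, writing $g_m(z) := \ln^{[m]}(z+e^{[m]})$, the chain rule gives $g_m'(z) = \left(\ln^{[m-1]}(z+e^{[m]})\right)^{-1}\cdot \frac{d}{dz}\ln^{[m-1]}(z+e^{[m]})$, and the inner derivative is handled by the inductive hypothesis applied with the shifted argument. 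The key bookkeeping point is that all the iterated logarithms $\ln^{[i]}(z+e^{[m]})$ for $0\le i\le m-1$ are strictly positive for $z>0$, because $e^{[m]} = e^{[m-1]}(e) > e^{[i]}$ forces $\ln^{[i]}(z+e^{[m]}) > \ln^{[i]}(e^{[i]})=1>0$ at each stage (here I would note $e^{[i]}\ge e > 1$). This positivity is what makes the product in the denominator legitimate and gives $g_m'(z)>0$.

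For the second formula \eqref{hmmprime}, I would first compute $g_m''(z)$ by differentiating the product expression $g_m'(z) = \prod_{i=0}^{m-1}\left(\ln^{[i]}(z+e^{[m]})\right)^{-1}$. Logarithmic differentiation is cleanest here: $\frac{g_m''}{g_m'} = -\sum_{i=0}^{m-1}\frac{\frac{d}{dz}\ln^{[i]}(z+e^{[m]})}{\ln^{[i]}(z+e^{[m]})}$, and $\frac{d}{dz}\ln^{[i]}(z+e^{[m]}) = \prod_{j=0}^{i-1}\left(\ln^{[j]}(z+e^{[m]})\right)^{-1}$ by \eqref{hmprime}. So $\frac{d}{dz}\ln^{[i]}/\ln^{[i]} = \prod_{j=0}^{i}\left(\ln^{[j]}(z+e^{[m]})\right)^{-1}$, and the $i=0$ term is just $(z+e^{[m]})^{-1}$. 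Substituting and factoring out $g_m'(z) = \prod_{i=0}^{m-1}(\ln^{[i]})^{-1}$, the quantity $2g_m'(z) + (z+e^{[m]})g_m''(z)$ — note the paper writes $(z+e_m)$, which I read as $(z+e^{[m]})$ — becomes $g_m'(z)\left(2 - (z+e^{[m]})\sum_{i=0}^{m-1}\prod_{j=0}^{i}(\ln^{[j]})^{-1}\right)$. The $i=0$ summand contributes $(z+e^{[m]})\cdot(z+e^{[m]})^{-1}=1$, which turns the $2$ into a $1$ and cancels the spurious $(z+e^{[m]})$ factor on that term; the remaining terms $i=1,\dots,m-1$ give exactly $\sum_{k=1}^{m-1}\prod_{i=1}^{k}(\ln^{[i]})^{-1}$ after relabeling, since $(z+e^{[m]})\cdot(z+e^{[m]})^{-1}=1$ absorbs the $j=0$ factor in each product. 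This yields precisely the claimed algebraic form.

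It remains to prove the strict positivity assertion in \eqref{hmmprime}, i.e.\ that $1 - \sum_{k=1}^{m-1}\prod_{i=1}^{k}\left(\ln^{[i]}(z+e^{[m]})\right)^{-1} > 0$ for all $z>0$. The idea is that each factor $\left(\ln^{[i]}(z+e^{[m]})\right)^{-1}$ is strictly less than $1$ — indeed, as noted above, $\ln^{[i]}(z+e^{[m]}) > \ln^{[i]}(e^{[m]}) \ge \ln^{[i]}(e^{[i+1]}) = e > 2$ for $i\ge 1$ and $z\ge 0$, using $e^{[m]}\ge e^{[i+1]}$ when $m\ge i+1$. Hence $\prod_{i=1}^{k}\left(\ln^{[i]}\right)^{-1} < 2^{-k}$, and $\sum_{k=1}^{m-1}\prod_{i=1}^{k}\left(\ln^{[i]}\right)^{-1} < \sum_{k=1}^{\infty}2^{-k} = 1$, giving the strict inequality. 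I would present the bound $\ln^{[i]}(z+e^{[m]})>e$ carefully as a short sub-lemma or inline induction on $i$, since it is the crux.

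The main obstacle I anticipate is purely notational/combinatorial rather than conceptual: keeping the iterated-logarithm indices, the shifts by $e^{[m]}$, the empty-product conventions, and the off-by-one in the summation bounds all consistent through the differentiation. The analytic content — that every iterated log of $z+e^{[m]}$ exceeds $1$ (in fact exceeds $e$ past the first), so the correction sum is a geometric-type series bounded by $1$ — is elementary; the care lies in verifying that $e^{[m]}$ is chosen large enough that every intermediate quantity stays in the domain where these logs are defined and positive, which is exactly why the argument uses $e^{[m]}$ and not $e^{[m-1]}$ or $1$.
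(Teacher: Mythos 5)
Your proposal is correct and follows essentially the same route as the paper: compute $\left(\ln^{[m]}(z+e^{[m]})\right)'$ by the chain rule, reduce $2h'+(z+e^{[m]})h''$ to the stated algebraic form, and then obtain positivity from $\ln^{[i]}(z+e^{[m]})>\ln^{[i]}(e^{[m]})=e^{[m-i]}>1$. The only (cosmetic) differences are that you organize the second derivative via logarithmic differentiation of the reciprocal product where the paper differentiates $\prod_{i=0}^{m-1}\ln^{[i]}(z+e^{[m]})$ directly, and you close with a geometric-series bound $\sum_{k\ge1}2^{-k}=1$ where the paper uses the cruder estimate $1-\tfrac{m-1}{e^{[m-1]}}>0$.
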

Here and below, $e^{[m]}=e^{[m]}(1)$ with $e(s)=e^s$ so that $\ln^{[m]}e^{[m]}=1$ and, the last term on the right-hand side of  \dref{hmmprime} is understood to be void when $m=1$.
\begin{proof} For $m\geq 1$, using product and chain rule, we first compute \dref{hmprime} and
$$
\left(\prod_{i=0}^{m-1}\ln^{[i]}(z+e^{[m]})\right)^{'}
=1+\sum_{k=1}^{m-1}\prod_{i=k}^{m-1}\ln^{[i]}(z+e^{[m]});
$$
then we find
\begin{align*}
&2\left(\ln^{[m]}(z+e^{[m]})\right)^{'}+(z+e^{[m]})\left(\ln^{[m]}(z+e^{[m]})\right)^{''}\\
&=2\left(\prod_{i=0}^{m-1}\ln^{[i]}(z+e^{[m]})\right)^{-1}
 -(z+e^{[m]})\left(\prod_{i=0}^{m-1}\ln^{[i]}(z+e^{[m]})\right)^{-2}\left(1
 +\sum_{k=1}^{m-1}\prod_{i=k}^{m-1}\ln^{[i]}(z+e^{[m]})\right).
 \end{align*}
Since   $\ln^{[i]}(e^{[m]})=e^{[m-i]}>1, i=1,2,\cdots, m-1$, we  further compute that
\begin{align*}
&\left[2\left(\ln^{[m]}(z+e^{[m]})\right)^{'}+(z+e^{[m]})\left(\ln^{[m]}(z+e^{[m]})\right)^{''}\right]\prod_{i=0}^{m-1}\ln^{[i]}(z
+e^{[m]})\\
&=1-\sum_{k=1}^{m-1}
 \prod_{i=1}^{k}\left(\ln^{[i]}(z+e^{[m]})\right)^{-1}\\
&=1-\left(\frac{1}{\ln^{[1]}(z+e^{[m]})}+\frac{1}{\prod_{i=1}^{2}\ln^{[i]}(z+e^{[m]})}+\cdots
+\frac{1}{\prod_{i=1}^{m-1}\ln^{[i]}(z+e^{[m]})}\right)\\
&\geq 1-\left(\frac{1}{\ln^{[1]}(e^{[m]})}+\frac{1}{\prod_{i=1}^{2}\ln^{[i]}(e^{[m]})}+\cdots
+\frac{1}{\prod_{i=1}^{m-1}\ln^{[i]}(e^{[m]})}\right)\\
&\geq 1-\frac{m-1}{e^{[m-1]}}>0,
 \end{align*}
 which shows the desired result \dref{hmmprime}.
\end{proof}

With Lemmas \ref{key-energy-id}  and \ref{lemma45630223116} at hand, we now  improve the $L^1$- regularity of solutions.

\begin{lemma}\label{lemma45630223} Let $r\geq1$ satisfy  \dref{thm-con} and let
\begin{equation}\label{x1.73sdfghh1426677gg}
g(u)=\left\{\begin{array}{ll}
 (u+e^{[r+1]})\ln^{[r+1]}(u+e^{[r+1]}), &\text{ if  }  \tau=0,\\
  u\ln u, &\text{if  }   \tau>0.\\
 \end{array}\right.
 \end{equation}
Then there exists $C>0$ such that  the corresponding solution of \dref{1.1} satisfies
\begin{equation}
\int_{\Omega}\left[|g(u)|+\frac{\tau\chi}{{2}}|\nabla v |^{{2}}\right](\cdot, t)\leq C, \quad \quad \forall t\in(0,T_m).
\label{zjscz2.5297x96302222114}
\end{equation}
\end{lemma}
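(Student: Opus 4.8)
The plan is to establish the two alternatives of \dref{x1.73sdfghh1426677gg} by two different dissipative arguments, both extracted from the identity \dref{key-id}, and to use throughout the starting bound $\|u(\cdot,t)\|_{L^1(\Omega)}\le M_1$, the $L^2$-bound of $v$ from Lemma~\ref{ul1-bdd}, and the one-sided estimate \dref{x1.731426677gghh} for $-\Delta w$.

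\emph{The case $(\chi-\mu_1)^+M_1<\tfrac{1}{2C_{GN}^4}$ (the only option when $\tau>0$).} Here I would work with \dref{key-id-ulnu}, i.e. $h(z)=\ln z$ and $k=0$, whose dissipation is $4\int_\Omega|\nabla u^{1/2}|^2$. If $\tau>0$, the term $-\chi\tau\int_\Omega u\,v_t$ on the right of \dref{key-id-ulnu} is cancelled by adding the identity
$$\tfrac{\tau\chi}{2}\tfrac{d}{dt}\int_\Omega\bigl(|\nabla v|^2+v^2\bigr)+\tau^2\chi\int_\Omega v_t^2=\tau\chi\int_\Omega u\,v_t,$$
obtained by testing $\tau v_t=\Delta v+u-v$ against $v_t$. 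Setting $\mathcal{E}:=\int_\Omega u\ln u+\tfrac{\tau\chi}{2}\int_\Omega|\nabla v|^2+\tfrac{\tau\chi}{2}\int_\Omega v^2$, one is left with
$$\frac{d}{dt}\mathcal{E}+4\int_\Omega|\nabla u^{1/2}|^2+\tau^2\chi\int_\Omega v_t^2=\chi\int_\Omega u^2-\chi\int_\Omega uv-\xi\int_\Omega u\,\Delta w+\int_\Omega(\ln u+1)f(u,w).$$
Then I would discard $-\chi\int_\Omega uv\le0$; bound $-\xi\int_\Omega u\,\Delta w$ by \dref{x1.731426677gghh}, splitting the arising $\int_\Omega uv$ as $\varepsilon\int_\Omega u^2+C_\varepsilon\|v\|_{L^2}^2$; and bound $\int_\Omega(\ln u+1)f(u,w)$ from above by separating $\Omega$ into $\{u\le S_\varepsilon\}$, where $f$ is locally bounded, and $\{u>S_\varepsilon\}$, where the definition \dref{1.3xcxddfff29} of $\mu_1$ (its $r=1$ instance) yields $(\ln u)f(u,w)\le-(\mu_1-\varepsilon)u^2$. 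This controls the right-hand side by $(\chi-\mu_1+\varepsilon')\int_\Omega u^2+C_\varepsilon$. Applying the $2$-D Gagliardo--Nirenberg inequality of Lemma~\ref{GN-inter} to $u^{1/2}$ together with $(a+b)^4\le8(a^4+b^4)$ and $\|u\|_{L^1}\le M_1$ gives $\int_\Omega u^2\le8C_{GN}^4M_1\int_\Omega|\nabla u^{1/2}|^2+C_\varepsilon$, so the hypothesis $(\chi-\mu_1)^+M_1<\tfrac{1}{2C_{GN}^4}$ makes the bad multiple of $\int_\Omega|\nabla u^{1/2}|^2$ strictly below the available coefficient $4$; a small further share of the dissipation, again via Gagliardo--Nirenberg ($\int u\ln u\le\varepsilon\int u^2+C_\varepsilon$) and the $v$-equation (controlling $\int|\nabla v|^2$ by $\int v_t^2$), upgrades this to $\mathcal{E}'+c\,\mathcal{E}\le C$, hence $\mathcal{E}\le C$. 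Since $u\ln u\ge-1/e$ pointwise and $\|v\|_{L^2}\le C$, this is \dref{zjscz2.5297x96302222114} with $g(u)=u\ln u$. When $\tau=0$ the same computation applies with the $v$-coupling step omitted, and one additionally notes $(u+e^{[r+1]})\ln^{[r+1]}(u+e^{[r+1]})\le C+C\,u\ln u$ for large $u$, which gives the stated form of $g$.

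\emph{The case $\tau=0$ and $\mu_r\in(0,+\infty]$ for some $r\ge1$.} I would apply \dref{key-id} with $k=e^{[r+1]}$ and $h(z)=\ln^{[r+1]}(z)$, so that $(u+k)h(u+k)=g(u)$ and $\tau v_t=0$. By Lemma~\ref{lemma45630223116} the coefficient $2h'(u+k)+(u+k)h''(u+k)$ is nonnegative, so the diffusion term is simply dropped. Writing $\Phi(u):=u(u+k)h'(u+k)-k[h(u+k)-h(k)]$ and $\Psi(u):=h(u+k)+(u+k)h'(u+k)$, one has $0\le u(u+k)h'(u+k)=u\bigl(\prod_{i=1}^r\ln^{[i]}(u+k)\bigr)^{-1}\le u$ and $0\le k[h(u+k)-h(k)]\le u$ (mean value theorem), so $|\Phi(u)|\le u$ and $\Phi(u)<0$ only on a bounded $u$-interval, while $\Psi(u)\ge\ln^{[r+1]}(u+k)\ge1$. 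Hence $-\chi\int_\Omega\Phi(u)v$ and $-\xi\int_\Omega\Phi(u)\Delta w$ are $\le C$ (using $v\ge0$, $\|v\|_{L^1}\le C$, $\int_\Omega\Delta w=0$ and $-\Delta w\le\kappa$). For the remaining term $\chi\int_\Omega\Phi(u)u+\int_\Omega\Psi(u)f(u,w)$ I would combine $\Phi(u)u\le u^2\bigl(\prod_{i=1}^r\ln^{[i]}(u+k)\bigr)^{-1}$, $\Psi(u)\ge\ln^{[r+1]}(u+k)$, and, from \dref{1.3xcxddfff29}, $-f(u,w)\ge(\mu_r-\varepsilon)\,u^2\bigl(\prod_{i=1}^r\ln^{[i]}u\bigr)^{-1}$ for $u$ large: because the damping carries the extra diverging factor $\ln^{[r+1]}(u)$, there is so much room that, for any fixed $c>0$, $\chi\Phi(u)u+c\,g(u)+\Psi(u)f(u,w)\le0$ once $u$ is large enough, the bounded-$u$ part contributing only a constant. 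Thus $\tfrac{d}{dt}\int_\Omega g(u)+c\int_\Omega g(u)\le C$, hence $\int_\Omega g(u)\le C$, which (as $\tau=0$) is \dref{zjscz2.5297x96302222114}.

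\emph{Main obstacle.} The delicate case is the first one with $\tau>0$. The presence of $\tau v_t$ forces the test function to be exactly $\ln u$ — the trick that cancels $\int_\Omega u v_t$ breaks down as soon as $\Phi(u)\ne u$ — so no iterated-logarithm slack is available and $\chi\int_\Omega u^2$ must be absorbed sharply; matching the Gagliardo--Nirenberg constant so that the threshold comes out exactly as $\tfrac{1}{2C_{GN}^4}$, keeping track of the $\varepsilon$-losses from the $\Delta w$ and kinetic terms, and controlling the behaviour of $(\ln u+1)f(u,w)$ near $u=0$, is where the real work lies. The second case is, by contrast, essentially a careful bookkeeping of iterated logarithms once Lemmas~\ref{key-energy-id} and \ref{lemma45630223116} are in hand.
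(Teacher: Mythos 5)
Your proposal is correct and follows essentially the same two‑case strategy as the paper: for $\tau=0$ under the first alternative it applies Lemma \ref{key-energy-id} with $h=\ln^{[r+1]}$, $k=e^{[r+1]}$, drops the (sign‑favourable, by Lemma \ref{lemma45630223116}) diffusion term and lets the extra diverging factor $\ln^{[r+1]}$ carried by the damping absorb the taxis and $g(u)$ terms, while under the second alternative it works with $u\ln u$ and the $2$-D Gagliardo--Nirenberg absorption producing exactly the threshold $(\chi-\mu_1)^+M_1<\tfrac{1}{2C_{GN}^4}$. The only (harmless) deviation is that you cancel the cross term $\tau\chi\int_\Omega u v_t$ by testing the $v$-equation with $v_t$ and enlarging the functional by $\tfrac{\tau\chi}{2}\int_\Omega v^2$, whereas the paper tests with $-\Delta v$ and absorbs $2\chi\int_\Omega u|\Delta v|$ via Young's inequality; both routes leave the same leading bad term $\chi\int_\Omega u^2$ and hence the same smallness condition.
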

\begin{proof}
\textbf{Case I:  $\tau= 0$.} In this case,  setting  $m=r+1$ for  consistency with Lemma \ref{lemma45630223116} and taking $h(u) =\ln^{[m]}(u)$ and $k=e^{[m]}$ in Lemma \ref{key-energy-id},   we first see that $g''(u)>0$, and then from  computations \dref{key-id}, \dref{hmprime} and \dref{hmmprime}, we obtain, for $t\in(0, T_m)$, that
\begin{equation}
\begin{split}
&\frac{d}{dt}\int_{\Omega}g(u)+\int_{\Omega}g''(u)|\nabla u|^2
\\
&=\chi\int_\Omega \left(u\left(\prod_{i=1}^{m-1}\ln^{[i]}(u+e^{[m]})\right)^{-1}-e^{[m]}\left(\ln^{[m]}(u+e^{[m]})
-1\right)\right)(u-v)\\
&\ \ -\xi\int_\Omega \left(u\left(\prod_{i=1}^{m-1}\ln^{[i]}(u+e^{[m]})\right)^{-1}-e^{[m]}\left(\ln^{[m]}(u+e^{[m]})
-1\right)\right)\Delta w\\
&\ \ +\int_\Omega \left(\ln^{[m]}(u+e^{[m]})+\left(\prod_{i=1}^{m-1}\ln^{[i]}(u+e^{[m]})\right)^{-1}\right)  f(u,w).
\end{split}
\label{cz2.51ssddd14114}
\end{equation}
We notice from \dref{key-id} in Lemma \ref{key-energy-id} that
$$
u\left(\prod_{i=1}^{m-1}\ln^{[i]}(u+e^{[m]})\right)^{-1}-e^{[m]}\left(\ln^{[m]}(u+e^{[m]})
-1\right)=\int_0^uzg''(z)dz>0,
$$
and then  we employ  the nonnegativity of $u, v$,  $\chi$ and $\xi$ and the one-sided   pointwise estimate of $-\Delta w$ in \dref{x1.731426677gghh}  to infer from \dref{cz2.51ssddd14114}  that
\begin{equation}
\begin{split}
&\frac{d}{dt}\int_{\Omega}g(u)+\int_\Omega g''(u)|\nabla u|^2+\chi e^{[m]}\int_\Omega \left(\ln^{[m]}(u+e^{[m]})
-1\right)u\\
&\ \  \ +\kappa\xi e^{[m]}\int_\Omega \left(\ln^{[m]}(u+e^{[m]})
-1\right)\\
&\leq \chi\int_\Omega u^2\left(\prod_{i=1}^{m-1}\ln^{[i]}(u+e^{[m]})\right)^{-1}+\kappa\xi\int_\Omega u\left(\prod_{i=1}^{m-1}\ln^{[i]}(u+e^{[m]})\right)^{-1}\\
&\ \ +\int_\Omega \left(\ln^{[m]}(u+e^{[m]})+\left(\prod_{i=1}^{m-1}\ln^{[i]}(u+e^{[m]})\right)^{-1}\right)  f(u,w).
\end{split}
\label{cz2.51ssddjjuiiid14114}
\end{equation}
In the sequel, we wish to control the taxis-involving integrals appearing on the right-hand sides of \dref{cz2.51ssddjjuiiid14114}. We shall proceed with the first alternative of \dref{thm-con}, since the second  alternative is included in \textbf{Case II} below.  Then, from
 the definition of $\mu_r$ in \dref{1.3xcxddfff29} and the first case of condition \dref{thm-con}, we can easily infer that $\mu_m=\mu_{r+1}=+\infty$, and so, by the local boundedness of $f$ due to \eqref{f-source}, we find  there exists a positive constant $ f_0$ such that
\begin{equation}
f(s,w)\leq f_0-\frac{(\chi+1) s^2}{\prod_{i=1}^{m}\ln^{[i]}(s+e^{[m]})},\ \ \  \forall (s, w)\in\left(0, \infty\right)\times \left(0, \max_{x\in\bar{\Omega}}w_0(x)\right).
\label{zjscz2.5297x96ssddd302222114}
\end{equation}
Noticing  that $\ln^{[i]}(e^{[m]}) >1 (i=1,2,\cdots m-1)$,  we deduce  from \dref{zjscz2.5297x96ssddd302222114} that
\begin{equation}
\begin{split}
&\chi u^2\left(\prod_{i=1}^{m-1}\ln^{[i]}(u+e^{[m]})\right)^{-1}+\kappa\xi u\left(\prod_{i=1}^{m-1}\ln^{[i]}(u+e^{[m]})\right)^{-1}\\
&\ \ + \left(\ln^{[m]}(u+e^{[m]})+\left(\prod_{i=1}^{m-1}\ln^{[i]}(u+e^{[m]})\right)^{-1}\right)  f(u,w)\\
&\leq - u^2\left(\prod_{i=1}^{m-1}\ln^{[i]}(u+e^{[m]})\right)^{-1}+\kappa\xi u+f_0\\
&\ \ +f_0 \ln^{[m]}(u+e^{[m]})-(\chi+1) u^2\left(\prod_{i=1}^{m-1}\ln^{[i]}(u+e^{[m]})\right)^{-2}\left(\ln^{[m]}(u+e^{[m]})\right)^{-1}\\
&\leq - u^2\left(\prod_{i=1}^{m-1}\ln^{[i]}(u+e^{[m]})\right)^{-1}+\tilde{f}_0\\
&\leq - (u+e^{[m]})\ln^{[m]}(u+e^{[m]})+\hat{f}_0=-g(u)+\hat{f}_0,
\end{split}
\label{2333cz2.51ssddddffjjdfffuiiid14114}
\end{equation}
where $\tilde{f}_0$ and $\hat{f}_0$ are finite numbers and are given respectively  by
\begin{align*}
\tilde{f}_0=\sup_{s>0}\Bigr\{&\kappa\xi s+f_0+f_0 \ln^{[m]}(s+e^{[m]})\\
 &(\chi+1) s^2\left(\prod_{i=1}^{m-1}\ln^{[i]}(s+e^{[m]})\right)^{-2}\left(\ln^{[m]}
 (s+e^{[m]})\right)^{-1}\Bigr\}<+\infty
 \end{align*}
and
$$
\hat{f}_0=
\sup_{s>0}\left\{- s^2\left(\prod_{i=1}^{m-1}\ln^{[i]}(s+e^{[m]})\right)^{-1}+\tilde{f}_0
+(s+e^{[m]})\ln^{[m]}(s+e^{[m]})\right\}<+\infty.
$$
 Combining \dref{cz2.51ssddjjuiiid14114} with \dref{2333cz2.51ssddddffjjdfffuiiid14114} and recalling $g^{''}(u)>0$ and the boundedness of $\Omega$, we readily derive an ODI for $g(u)$ as follows:
$$
\frac{d}{dt}\int_{\Omega}g(u)+\int_{\Omega}g(u)\leq C_1,  \ \ \forall t\in(0, T_m),
\label{cz2.51ssddjjuiiid1sdfff4114}
$$
entailing trivially   that
\begin{equation}
 \int_{\Omega}g(u)\leq C_2, \ \ \forall t\in(0, T_m).
\label{cz2.51ssddjjuiiid1sdfjjkff4114}
\end{equation}

\textbf{Case II:  $\tau>0$.} We multiply  the second  equation in \dref{1.1}
  by $-\Delta v $, integrating over $\Omega$ and using  the Young inequality to obtain
\begin{equation}
 \frac{\tau}{{2}}\frac{d}{dt}\int_\Omega |\nabla v |^2+\int_\Omega |\Delta v |^2+\int_{\Omega}|\nabla v |^2
= -\int_\Omega  u \Delta v, \ \ \  \forall  t\in(0, T_m).
\label{cz2.51dfgggssddjjuiiid14114}
\end{equation}
Combining  \dref{cz2.51dfgggssddjjuiiid14114} with \dref{key-id-ulnu}, using the $(L^1, L^2)$-bound  of $(u,v)$ in Lemma \ref{ul1-bdd},  the pointwise estimate of $-\Delta w$ in \dref{x1.731426677gghh} and Young's inequality  with epsilon, for any $\epsilon\in (0,\chi)$, we have
\begin{equation}
\begin{split}
&\frac{d}{dt}\int_{\Omega}\left[u\ln u+\frac{\tau\chi}{{2}}|\nabla v |^{{2}}\right]+4\int_\Omega |\nabla u^\frac{1}{2}|^2+\chi\int_{\Omega}|\Delta v |^2+\chi\int_{\Omega}|\nabla v |^2
\\
&=-2\chi\int_\Omega  u \Delta v-\xi\int_\Omega    u\Delta w
  + \int_\Omega  (1+\ln u)f(u,w)\\
&\leq 2\chi\int_\Omega  u |\Delta v|+\xi\int_\Omega    u\left(\tau \|w_0\|_{L^\infty(\Omega)} v+\kappa\right)
  +\int_\Omega  (1+\ln u)f(u,w) \\
&\leq\chi\int_{\Omega}|\Delta v |^2+(\chi+\epsilon)\int_\Omega  u^2+\frac{\tau^2\xi^2\|w_0\|_{L^\infty}^2}{4\epsilon}\int_\Omega   v^2+\kappa\xi\int_\Omega    u+
\int_\Omega   (1+\ln u)f(u,w)\\
&\leq \chi\int_{\Omega}|\Delta v |^2+
\int_\Omega \left[(\chi+\epsilon)  u^2 + (1+\ln u)f(u,w)\right]+C_\epsilon.
\end{split}
\label{ssddcz2.51ssddjjdfffuiiid14114}
\end{equation}
Now, we are almost  in the same situation as \cite{Xiang18-JMP};  for convenience, we present a  short argument here:  by the definition of   $\mu_1$ in \dref{1.3xcxddfff29}, we find  there exists a constant $ s_\epsilon>1$ such that
\begin{equation}
f(s,w)\leq -(\mu_1-\epsilon)\frac{ s^2}{\ln s},\ \ \  \forall (s, w)\in\left(s_\epsilon, \infty\right)\times \left(0, \max_{x\in\bar{\Omega}}w_0(x)\right).
\label{f-grow}
\end{equation}
where $\mu_1$ is understood as $\chi+1$ in the case of $\mu_1=+\infty$ (We here remark that $\mu_1=0$ is quite possible, which is the case, in particular, when $f\equiv0$). Then,  by  \dref{f-grow}, \eqref{f-source} and the boundedness of $\Omega$, we readily conclude  there exists  $C_\epsilon>0$ such that
\begin{equation}
 \int_\Omega \left[(\chi+\epsilon) u^2+  (1+\ln u)f(u,w)\right]\leq \left[(\chi-\mu_1)^+ +3\epsilon\right]\int_\Omega u^2+C_\epsilon.
\label{cz2.51ssddddfjfiiid14114}
\end{equation}
Inserting \dref{cz2.51ssddddfjfiiid14114} into \dref{ssddcz2.51ssddjjdfffuiiid14114}, we  end up with
\begin{equation}
\frac{d}{dt}\int_{\Omega}\left[u\ln u+\frac{\tau\chi}{{2}}|\nabla v |^{{2}}\right]+\chi\int_{\Omega}|\nabla v |^2+4\int_\Omega \nabla u^{\frac{1}{2}}|^2\leq \left[(\chi-\mu_1)^+ +3\epsilon\right]\int_\Omega u^2+C_\epsilon.
\label{ssddcz2.51ssddjjdfffuisdfffgiid14114}
\end{equation}
The  2D G-N inequality (c.f. Lemma \ref{GN-inter}) along with the $L^1$-boundedness of $u$ in \eqref{ul1-est-sub} yields
\begin{equation}
\begin{split}
 \int_\Omega u^2= |u^{\frac{1}{2}}\|^{4}_{L^4}&\leq   C_{GN}^4 \left(\|\nabla u^{\frac{1}{2}}\|^\frac{1}{2}_{L^2}\|u^{\frac{1}{2}}\|^\frac{1}{2}_{L^2}+\|u^{\frac{1}{2}}
 \|_{L^2}\right)^4\\
&\leq 8C_{GN}^4\left(M_1\|\nabla u^{\frac{1}{2}}\|^{2}_{L^2(\Omega)}+M_1^2\right).
\end{split}
\label{cz2.51ssdsdddddddfjfiiid14114}
\end{equation}
Next, since
$$u\ln u\leq \epsilon u^2+ L_\epsilon, \ \ \ L_\epsilon= \sup_{s>0}\{s\ln s-
\epsilon s^2\}<+\infty, $$
we thus get from  \dref{cz2.51ssdsdddddddfjfiiid14114} and  \dref{ssddcz2.51ssddjjdfffuisdfffgiid14114}  that
\begin{equation}
\begin{split}
\frac{d}{dt}\int_{\Omega}\left[u\ln u+\frac{\tau\chi}{2}|\nabla v |^2\right]&+\int_\Omega u\ln u+\chi\int_{\Omega}|\nabla v |^2\\
&+4\left(1-2M_1C_{GN}^4\left[(\chi-\mu_1)^+ +4\epsilon\right]\right)\int_\Omega |\nabla u^\frac{1}{2}|^{2}\\
&\leq 8M_1^2C_{GN}^4\left[(\chi-\mu_1)^+ +4\epsilon\right]+L_\epsilon+C_\epsilon.
\end{split}
\label{ssddcz2.51ssddjjdfffuihhhhhhhhjjjsdfffgiid14114}
\end{equation}
Now, due to the second alternative of  \dref{thm-con}, we fix, for instance,
$$
\epsilon=\frac{1-2M_1C_{GN}^4(\chi-\mu_1)^+}{8}>0£¬
$$
 in \dref{ssddcz2.51ssddjjdfffuihhhhhhhhjjjsdfffgiid14114} and apply a couple of elementary manipulations to conclude  that
$$
\int_{\Omega}\left[u\ln u+\frac{\tau\chi}{{2}}|\nabla v |^{{2}}\right]\leq C_3,  \quad \quad \forall t\in(0, T_m),
$$
 which along with the fact that $-s \ln s\leq e^{-1}$ for all $s > 0$ further entails
\begin{equation}
\int_\Omega\left[|u\ln u|+\frac{\tau\chi}{{2}}|\nabla v |^{{2}}\right]\leq C_4,  \quad \quad \forall t\in(0, T_m).
\label{cz2.51ssddjjuiddfffiid1sddfggdfjjkff4114}
\end{equation}
 The desired estimate \dref{zjscz2.5297x96302222114} follows from \dref{cz2.51ssddjjuiddfffiid1sddfggdfjjkff4114}, \dref{cz2.51ssddjjuiiid1sdfjjkff4114} and the definition of $g$ in \dref{x1.73sdfghh1426677gg}.
\end{proof}
In the case of $\tau>0$, upon the obtainment of the boundedness in \dref{cz2.51ssddjjuiddfffiid1sddfggdfjjkff4114}, in 2D setting, using the well-known procedure, cf. \cite{Osakix391, Xiang15-JDE, Xiang18-JMP},  we can easily obtain the $L^\infty$-boundedness of $u$ and then the claimed boundedness in Theorem \ref{theorem3}. In the case of $\tau=0$, we shall show that the boundnedness information in \dref{zjscz2.5297x96302222114} will also be sufficient to derive our desired boundedness as announced in Theorem \ref{theorem3}. For this purpose, we need the following generalization of the logarithmic version of  Gagliardo-Nirenberg inequality  \cite[Lemma A.5] {Tao79477}, whose idea was initially  demonstrated in \cite{Biler555216}.

\begin{lemma}\label{lemma45630jjjj223}
 Let $\Omega\subset \mathbb{R}^2$ be a bounded domain with smooth boundary, and let $q\in(1,\infty)$,
$r\in (0, q)$. If $\frac{|g(s)|}{s}$ is nondecreasing in $(s_0,+\infty)$ (for some $s_0>1$)
 and
 $\lim_{s\rightarrow+\infty}\frac{|g(s)|}{s}=+\infty$,
then there exists $C >0$ such that for each $\varepsilon >0$ one can pick $ C_{\varepsilon}  > 0$ with the property that
\begin{equation}\|\varphi\|_{L^{q}(\Omega)}^q \leq \varepsilon\|\nabla \varphi\|_{L^{2}(\Omega)}^{q-r}\|g(\varphi)\|^{r}_{L^{r}(\Omega)}+C\|\varphi\|_{L^{r}(\Omega)}^q+  C_{\varepsilon}
\label{1sdrfgggg.3xcssddffx29}
\end{equation}
holds for all $\varphi\in W^{1,2}(\Omega).$
\end{lemma}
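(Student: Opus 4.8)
The plan is to reduce the logarithmic Gagliardo--Nirenberg inequality to the ordinary Gagliardo--Nirenberg interpolation inequality (Lemma \ref{GN-inter}) by splitting the domain according to the size of $|\varphi|$, and then to exploit the superlinear growth $\frac{|g(s)|}{s}\to+\infty$ to absorb the contribution of the ``large'' part. Concretely, for a threshold $K>s_0$ to be chosen large depending on $\varepsilon$, I would write $\Omega=\{|\varphi|\le K\}\cup\{|\varphi|>K\}$ and estimate $\|\varphi\|_{L^q(\Omega)}^q$ as a sum of integrals over these two sets. On $\{|\varphi|\le K\}$ one has the trivial bound $\int_{\{|\varphi|\le K\}}|\varphi|^q\le K^q|\Omega|$, which is a constant $C_\varepsilon$ (it depends on $\varepsilon$ through $K$, which is exactly the form allowed in \eqref{1sdrfgggg.3xcssddffx29}). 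The whole point is the estimate on $\{|\varphi|>K\}$.

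On the set $\{|\varphi|>K\}$, since $\frac{|g(s)|}{s}$ is nondecreasing on $(s_0,+\infty)$, for $K>s_0$ we have $\frac{|g(K)|}{K}\le\frac{|g(|\varphi|)|}{|\varphi|}$ pointwise there, hence $|\varphi|\le\frac{K}{|g(K)|}|g(\varphi)|$ on that set. I would apply Lemma \ref{GN-inter} on the full domain with exponents $p=q$, $q$ replaced by $r$, $n=2$, writing
\[
\|\varphi\|_{L^q(\Omega)}^q\le C_{GN}^q\Bigl(\|\nabla\varphi\|_{L^2(\Omega)}^{\delta}\|\varphi\|_{L^r(\Omega)}^{1-\delta}+\|\varphi\|_{L^r(\Omega)}\Bigr)^q
\le C\|\nabla\varphi\|_{L^2(\Omega)}^{q\delta}\|\varphi\|_{L^r(\Omega)}^{q(1-\delta)}+C\|\varphi\|_{L^r(\Omega)}^q,
\]
where $\delta=\frac{\frac{2}{r}-\frac{2}{q}}{\frac{2}{r}}=1-\frac{r}{q}\in(0,1)$, so that $q\delta=q-r$ and $q(1-\delta)=r$. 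This already produces the $\|\nabla\varphi\|_{L^2}^{q-r}$ and $\|\varphi\|_{L^r}^q$ structure of the target; what remains is to convert the factor $\|\varphi\|_{L^r(\Omega)}^r$ sitting inside the first term into $\|g(\varphi)\|_{L^r(\Omega)}^r$ times a small constant. Here I split once more: $\|\varphi\|_{L^r(\Omega)}^r=\int_{\{|\varphi|\le K\}}|\varphi|^r+\int_{\{|\varphi|>K\}}|\varphi|^r$, bound the first piece by $K^r|\Omega|$ and the second by $\bigl(\tfrac{K}{|g(K)|}\bigr)^r\|g(\varphi)\|_{L^r(\Omega)}^r$. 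Feeding this back and using Young's inequality to separate the constant part $K^r|\Omega|$ (which, multiplied by $\|\nabla\varphi\|_{L^2}^{q-r}$, can be split into $\varepsilon'\|\nabla\varphi\|_{L^2}^2\cdot(\dots)$ plus a constant, or more simply absorbed once one also controls $\|\nabla\varphi\|_{L^2}$; alternatively one runs the argument keeping that term as $C\|\varphi\|_{L^r}^q$-type after re-interpolating), I obtain a bound of the form
\[
\|\varphi\|_{L^q(\Omega)}^q\le C\Bigl(\tfrac{K}{|g(K)|}\Bigr)^{r}\|\nabla\varphi\|_{L^2(\Omega)}^{q-r}\|g(\varphi)\|_{L^r(\Omega)}^{r}+C\|\varphi\|_{L^r(\Omega)}^q+C_K.
\]
Since $\lim_{s\to+\infty}\frac{|g(s)|}{s}=+\infty$, i.e. $\frac{K}{|g(K)|}\to0$ as $K\to\infty$, given $\varepsilon>0$ I choose $K=K(\varepsilon)$ so large that $C\bigl(\tfrac{K}{|g(K)|}\bigr)^r\le\varepsilon$, and set $C_\varepsilon:=C_K$; this yields exactly \eqref{1sdrfgggg.3xcssddffx29}.

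The main obstacle, and the only place where care is genuinely needed, is the bookkeeping in the step where the constant $K^r|\Omega|$ produced by the ``small $|\varphi|$'' region is multiplied against $\|\nabla\varphi\|_{L^2}^{q-r}$: this term is neither of the two allowed forms on the right-hand side of \eqref{1sdrfgggg.3xcssddffx29}. There are two clean ways around it. The cleanest is to apply Lemma \ref{GN-inter} not with $\|\varphi\|_{L^r}$ as the low-order factor but directly replacing $\varphi$ by a truncation, or to note that on $\{|\varphi|>K\}$ one has $|\varphi|^q=|\varphi|^{q-r}|\varphi|^r\le(\tfrac{K}{|g(K)|})^r|\varphi|^{q-r}|g(\varphi)|^r$ and then interpolate $\int|\varphi|^{q-r}|g(\varphi)|^r$ via Hölder and the GN inequality applied to $\varphi$ with exponent $q-r$ rather than $q$ — keeping the $g$-factor untouched throughout so that no spurious constant ever multiplies $\|\nabla\varphi\|_{L^2}$. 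I would carry out this second route: estimate $\int_{\{|\varphi|>K\}}|\varphi|^q\le(\tfrac{K}{|g(K)|})^r\|g(\varphi)\|_{L^r}^r\|\varphi\|_{L^{(q-r)\sigma'}}^{q-r}$ by Hölder with $\sigma=q/(q-r)$ so that $(q-r)\sigma'=q$, which reintroduces $\|\varphi\|_{L^q}$ — so instead pick $\sigma$ slightly larger, apply GN to $\|\varphi\|_{L^{(q-r)\sigma'}}$ with a subcritical exponent below $q$ so the GN exponent on $\|\nabla\varphi\|_{L^2}$ is $<2$, then use Young's inequality to put everything into the form $\varepsilon\|\nabla\varphi\|_{L^2}^{q-r}\|g(\varphi)\|_{L^r}^r+C\|\varphi\|_{L^r}^q+C_\varepsilon$. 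Either implementation is routine once the truncation idea is in place; this is precisely the scheme of \cite[Lemma A.5]{Tao79477} and \cite{Biler555216}, and the only new ingredient is that monotonicity of $\frac{|g(s)|}{s}$ (rather than an explicit $g(s)=s\ln s$) suffices to run it.
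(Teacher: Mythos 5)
Your overall strategy (choose a large threshold $\lambda=\lambda(\varepsilon)$, use the monotonicity of $|g(s)|/s$ to dominate $|\varphi|$ by $\frac{\lambda}{|g(\lambda)|}|g(\varphi)|$ on the set where $|\varphi|$ is large, and let $\frac{\lambda}{|g(\lambda)|}\to0$ supply the factor $\varepsilon$) is exactly the paper's, and you correctly diagnose the obstacle in the naive implementation: applying Lemma \ref{GN-inter} to $\varphi$ itself leaves a term of the form $CK^r|\Omega|\,\|\nabla\varphi\|_{L^2}^{q-r}$ with a large constant, which is not of the allowed form in \eqref{1sdrfgggg.3xcssddffx29}. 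The problem is the route you then commit to. Your ``second route'' hinges on the Hölder estimate
$\int_{\{|\varphi|>K\}}|\varphi|^{q-r}|g(\varphi)|^{r}\leq \|g(\varphi)\|_{L^r}^{r}\,\|\varphi\|_{L^{(q-r)\sigma'}}^{q-r}$,
but no choice of conjugate exponents produces this: Hölder applied to the product $|g(\varphi)|^r\cdot|\varphi|^{q-r}$ yields $\|g(\varphi)\|_{L^{r\sigma}}^{r}\|\varphi\|_{L^{(q-r)\sigma'}}^{q-r}$, and keeping the first factor in $L^{r}$ forces $\sigma=1$, $\sigma'=\infty$. Any genuine split puts $g(\varphi)$ in $L^{r\sigma}$ with $\sigma>1$, and there is no way to interpolate that back down to $\|g(\varphi)\|_{L^r}$ without derivative information on $g(\varphi)$, which you do not have. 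Tweaking $\sigma$ and invoking Young's inequality afterwards cannot repair this, so the route you carry out does not close.

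The fix is the first option you mention and then set aside: apply the Gagliardo--Nirenberg inequality to a Lipschitz truncation of $\varphi$ rather than to $\varphi$ itself, which is precisely what the paper does. Define $\alpha(s)=0$ for $|s|\leq\lambda$, $\alpha(s)=2(|s|-\lambda)$ for $\lambda<|s|<2\lambda$ and $\alpha(s)=|s|$ for $|s|\geq2\lambda$; then $|\alpha'|\leq2$, so $\|\nabla\alpha(\varphi)\|_{L^2}^{q-r}\leq2^{q-r}\|\nabla\varphi\|_{L^2}^{q-r}$, and $\alpha(\varphi)$ is supported in $\{|\varphi|>\lambda\}$, so the low-order factor satisfies $\|\alpha(\varphi)\|_{L^r}^{r}\leq\bigl(\tfrac{\lambda}{|g(\lambda)|}\bigr)^{r}\|g(\varphi)\|_{L^r}^{r}$ with \emph{no} leftover constant multiplying the gradient; the remainder obeys $0\leq|\varphi|-\alpha(\varphi)\leq2\lambda$ pointwise and is absorbed into $C_\varepsilon=2^q(2\lambda)^q|\Omega|$. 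Note also that the interpolation region $\lambda<|s|<2\lambda$ is needed for exactly these two properties simultaneously (a sharp cutoff would not leave $W^{1,2}$ invariant with a controlled gradient while keeping the remainder uniformly bounded). With that substitution your argument becomes the paper's proof; as written, the step you actually execute is invalid.
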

\begin{proof}
According to the Gagliardo-Nirenberg inequality, there exists $C_1 > 0$ such that
\begin{equation}\|\psi\|_{L^{q}(\Omega)}^q \leq C_1\|\nabla \psi\|_{L^2(\Omega)}^{q-r}\|\psi\|^{r}_{L^r(\Omega)}+C_1\|\psi\|_{L^r(\Omega)}^q, \  \ \ \forall\psi\in W^{1,2}(\Omega).
\label{cz2.5g556789ssdfffhhjui78jj90099}
\end{equation}
Since $\lim_{s\rightarrow+\infty}\frac{|g(s)|}{s}=+\infty$, for any $\varepsilon > 0$, we can choose $\lambda=\lambda(\varepsilon) >s_0$ large enough fulfilling
\begin{equation}\frac{2^{2q-r}C_1\lambda^r}{|g(\lambda)|^r }< \varepsilon.
\label{cz2.5g556789ssdfffhsdfffffhjui78jj90099}
\end{equation}
Next, define
\begin{equation*}
\alpha(s)= \left\{\begin{array}{ll}
0, &\mbox{if}~~|s|\leq\lambda,\\
2(|s|-\lambda), &\mbox{if}~~\lambda<|s|<2\lambda,\\
|s|, &\mbox{if}~~|s|\geq2\lambda.
 \end{array}\right.
\end{equation*}
Then we see $\alpha\in W^{1,\infty}_{loc} (\mathbb{R})$, $0 \leq \alpha(s)\leq |s|$ and $|\alpha'(s)| \leq2$  for a.e. $s\in \mathbb{R}$.
 Hence,
 $$\|\alpha(\varphi)\|_{L^r}^q\leq \|\varphi\|_{L^r}^q~~~\mbox{and}~~~\|\nabla\alpha(\varphi)\|_{L^2}^{q-r}\leq 2^{q-r} \|\nabla\varphi\|_{L^2}^{q-r}, \ \ \ \forall \varphi\in W^{1,2}(\Omega).$$
 Moreover, since $\frac{|g(s)|}{s}$ is nondecreasing on  $(s_0,+\infty)$, we infer
\begin{align*}
 \|\alpha(\varphi)\|_{L^r(\Omega)}^r&= \int_{\{|\varphi|>\lambda\}} |\alpha(\varphi)|^r\\
 &\leq \int_{\{|\varphi|>\lambda\} }|\varphi|^r \\
&=\int_{\{|\varphi|>\lambda\}} \left|\frac{\varphi}{g(\varphi)}g(\varphi)\right|^r \\
&\leq  \left(\frac{\lambda}{|g(\lambda)|}\right)^r\int_{\{|\varphi|>\lambda\}}|g(\varphi)|^r \\
& \leq  \left(\frac{\lambda}{|g(\lambda)|}\right)^r\left\|g(\varphi)\right\|^r_{L^r(\Omega)} .
\end{align*}
Next, it follows from $0\leq |s|-\alpha(s)\leq 2\lambda$ on $\mathbb{R}$ that
$$\left\||\varphi|-\alpha(\varphi)\right\|^q_{L^q(\Omega)}\leq (2\lambda)^q|\Omega|.$$
In view of the elementary inequality $(a+b)^q\leq 2^q(a^q +b^q )$  for all nonnegative $a$ and $b$,  we thus deduce from \dref{cz2.5g556789ssdfffhhjui78jj90099} and \dref{cz2.5g556789ssdfffhsdfffffhjui78jj90099}  that
\begin{align*}
\|\varphi\|_{L^q}^q \leq &2^q\|\alpha(\varphi)\|_{L^q}^q+2^q\||\varphi|-\alpha(\varphi)\|_{L^q}^q \\
&\leq  2^qC_1\left[\|\nabla\alpha(\varphi)\|_{L^2}^{q-r}\|\alpha(\varphi)\|_{L^r}^{r}
+\|\alpha(\varphi)\|_{L^r}^{q}\right]+2^q\left\||\varphi|-\alpha(\varphi)\right\|_{L^q}^q \\
&\leq C_12^{2q-r}\|\nabla\varphi\|_{L^2}^{q-r}\left(\frac{\lambda}{|g(\lambda)|}\right)^r
\left\|g(\varphi)\right\|^r_{L^r}+
2^qC_1\|\varphi\|_{L^r}^{q}+2^q(2\lambda)^q|\Omega| \\
&\leq \varepsilon \|\nabla\varphi\|_{L^2}^{q-r}
\left\|g(\varphi)\right\|^r_{L^r}+2^qC_1\|\varphi\|_{L^r}^{q}+2^q(2\lambda)^q|\Omega|.
\end{align*}
In light of our choice of $\lambda$, this entails \dref{1sdrfgggg.3xcssddffx29} by choosing  $C := 2^qC_1$ and $C_\varepsilon=2^q(2\lambda)^q|\Omega|$.
\end{proof}
\begin{corollary}
For any $m\geq1$,  $$g(s)=(s+e^{[m]}) \ln^{[m]}(s+e^{[m]}),\ \ s>0.$$
 It is evident to see that  $g>0$ on $(0,+\infty)$,
 $$\frac{g(s)}{s} \  \mbox{is nondecreasing on} \  (1,+\infty) \text{ and } \lim_{s\rightarrow+\infty}\frac{g(s)}{s}=+\infty.$$
\end{corollary}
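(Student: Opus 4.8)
The plan is to treat the three assertions about $g(s)=(s+e^{[m]})\ln^{[m]}(s+e^{[m]})$ separately. The positivity and the divergence of $g(s)/s$ are immediate. For $s>0$ we have $s+e^{[m]}>e^{[m]}$, and since $\ln^{[m]}$ is strictly increasing with $\ln^{[m]}(e^{[m]})=1$, it follows that $\ln^{[m]}(s+e^{[m]})>1$; as the factor $s+e^{[m]}$ is positive, $g>0$ on $(0,+\infty)$. For the limit the elementary bound
\[
\frac{g(s)}{s}=\frac{s+e^{[m]}}{s}\,\ln^{[m]}(s+e^{[m]})\geq \ln^{[m]}(s+e^{[m]})
\]
suffices, because $s+e^{[m]}\to+\infty$ and every iterate of $\ln$ still diverges (however slowly), so $g(s)/s\to+\infty$.

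For the monotonicity of $g(s)/s$ on $(1,+\infty)$ I would verify $s\,g'(s)-g(s)\geq 0$ there. Differentiating $g$ by the product rule and invoking \eqref{hmprime} (with $\ln^{[0]}(s+e^{[m]})=s+e^{[m]}$) gives
\[
g'(s)=\ln^{[m]}(s+e^{[m]})+\left(\prod_{i=1}^{m-1}\ln^{[i]}(s+e^{[m]})\right)^{-1},
\]
the product being empty (equal to $1$) when $m=1$. Multiplying by $s$, subtracting $g(s)=(s+e^{[m]})\ln^{[m]}(s+e^{[m]})$, and cancelling the common term $s\ln^{[m]}(s+e^{[m]})$ leaves the clean identity
\[
s\,g'(s)-g(s)=\frac{s}{\prod_{i=1}^{m-1}\ln^{[i]}(s+e^{[m]})}-e^{[m]}\ln^{[m]}(s+e^{[m]}).
\]
Consequently the claimed monotonicity on $(1,+\infty)$ is equivalent to the single scalar comparison
\[
s\ \geq\ e^{[m]}\prod_{i=1}^{m}\ln^{[i]}(s+e^{[m]}),\qquad \forall\, s>1,
\]
which I would establish by analyzing $\psi(s):=s-e^{[m]}\prod_{i=1}^{m}\ln^{[i]}(s+e^{[m]})$ and proving $\psi\geq 0$ on the whole interval $(1,+\infty)$.

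The main obstacle is pushing this comparison all the way down to the left endpoint $s=1$. For large $s$ the inequality is very comfortable: the left side is linear in $s$ while the right side is $e^{[m]}$ times a product of iterated logarithms of $s$, hence $o(s)$, so $\psi(s)\to+\infty$ — the same slow growth that underlies $g(s)/s\to+\infty$. Near $s=1$, by contrast, the factors $\ln^{[i]}(s+e^{[m]})$ are close to their endpoint values $\ln^{[i]}(e^{[m]})=e^{[m-i]}$, so the right side is of order $e^{[m]}\prod_{i=1}^{m}e^{[m-i]}$ and the comparison becomes tight; the crude monotone lower bounds $\ln^{[i]}(s+e^{[m]})\geq e^{[m-i]}$ control the right side only from below and do not by themselves close the gap. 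The plan is therefore to differentiate $\psi$ once, using \eqref{hmprime} to write $\psi'(s)=1-e^{[m]}\bigl(\prod_{i=1}^{m}\ln^{[i]}(s+e^{[m]})\bigr)\sum_{j=1}^{m}\bigl(\prod_{i=0}^{j}\ln^{[i]}(s+e^{[m]})\bigr)^{-1}$, and to pin down the sign of $\psi$ from the sign pattern of $\psi'$ together with its endpoint and limiting behaviour, so that confirming $\psi\geq 0$ right down to $s=1$ — rather than only beyond some threshold $s_0>1$ — is the delicate heart of the argument.
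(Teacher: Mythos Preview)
Your reductions for positivity and for the limit are clean and correct, and your computation of $sg'(s)-g(s)$ is right. But the monotonicity claim as literally stated in the Corollary is \emph{false}, and your instinct that the endpoint $s=1$ is the trouble spot is exactly on target: the inequality $s\geq e^{[m]}\prod_{i=1}^{m}\ln^{[i]}(s+e^{[m]})$ fails there. Already for $m=1$ one has $sg'(s)-g(s)=s-e\ln(s+e)$, which at $s=1$ equals $1-e\ln(1+e)\approx -2.57<0$; indeed $g(s)/s=(1+e/s)\ln(s+e)$ decreases from about $4.88$ at $s=1$ to about $3.66$ at $s=2$. So no sign analysis of $\psi$ can rescue the statement on all of $(1,+\infty)$, and the ``delicate heart'' you identified is in fact unprovable.

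The paper gives no proof of this Corollary --- it merely asserts ``it is evident'' --- so there is nothing to compare your argument against; rather, the Corollary is mis-stated. What the paper actually \emph{needs} is the hypothesis of the preceding generalized Gagliardo--Nirenberg lemma (Lemma~\ref{lemma45630jjjj223}), namely that $|g(s)|/s$ be nondecreasing on $(s_0,+\infty)$ for \emph{some} $s_0>1$. That weaker assertion your argument does deliver: since $\prod_{i=1}^{m}\ln^{[i]}(s+e^{[m]})=o(s)$ as $s\to+\infty$, your function $\psi(s)=s-e^{[m]}\prod_{i=1}^{m}\ln^{[i]}(s+e^{[m]})$ is eventually positive, and your formula for $\psi'$ shows $\psi'(s)\to 1$, so $\psi>0$ on $(s_0,+\infty)$ for $s_0$ large enough. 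Relaxing the target interval accordingly completes the proof of what the application actually requires.
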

\begin{lemma}\label{lemma4ssdffff5630223}  There exists   $C>0$ such that
 the corresponding solution of \dref{1.1} satisfies
\begin{equation}\label{ul2-bdd}
 \tau\int_\Omega|\nabla v (\cdot,t)|^{{4}}+ \int_\Omega u ^{{2}}(\cdot,t)\leq C, \quad \forall  t\in(0,T_m).
\end{equation}
\end{lemma}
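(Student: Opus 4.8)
\emph{Strategy.} When $\tau=0$ the first summand in \dref{ul2-bdd} is absent, so the claim reduces to a uniform $L^2$-bound for $u$; when $\tau>0$ the two quantities have to be estimated simultaneously. In both cases the engine is to test the first equation of \dref{1.1} by $u$, which yields the dissipation $\int_\Omega|\nabla u|^2$; the super-quadratic integrals produced by the chemotactic and haptotactic couplings are then routed into this dissipation by the logarithmic Gagliardo--Nirenberg inequality of Lemma \ref{lemma45630jjjj223}, applied with the slowly growing weight $g$ for which Lemma \ref{lemma45630223} has already supplied $\|g(u)\|_{L^1(\Omega)}\le C$, while the pointwise estimate \dref{x1.731426677gghh} reduces the haptotactic term to lower-order quantities.

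\emph{The case $\tau=0$.} Testing the $u$-equation by $u$, the chemotactic term equals $-\frac{\chi}{2}\int_\Omega u^2\Delta v=\frac{\chi}{2}\int_\Omega u^3-\frac{\chi}{2}\int_\Omega u^2 v\le\frac{\chi}{2}\int_\Omega u^3$ (using $\Delta v=v-u$ and $v\ge0$), the haptotactic term equals $-\frac{\xi}{2}\int_\Omega u^2\Delta w\le\frac{\xi\kappa}{2}\int_\Omega u^2$ by \dref{x1.731426677gghh}, and $\int_\Omega uf(u,w)\le C$ by \eqref{f-source} and \dref{ul1-est-sub}; thus
\[
\frac12\frac{d}{dt}\int_\Omega u^2+\int_\Omega|\nabla u|^2\le\frac{\chi}{2}\int_\Omega u^3+\frac{\xi\kappa}{2}\int_\Omega u^2+C.
\]
I would then apply Lemma \ref{lemma45630jjjj223} with $\varphi=u$, $q=3$, $r=1$ and with $g$ as in \dref{x1.73sdfghh1426677gg} --- which, by the Corollary following Lemma \ref{lemma45630jjjj223}, does satisfy the required monotonicity and growth --- and combine it with $\|g(u)\|_{L^1(\Omega)}\le C$ from \dref{zjscz2.5297x96302222114} and $\|u\|_{L^1(\Omega)}\le M_1$ to obtain $\int_\Omega u^3\le\varepsilon\int_\Omega|\nabla u|^2+C_\varepsilon$ for every $\varepsilon>0$; simultaneously Lemma \ref{GN-inter} (with $p=2$, $q=r=1$, $n=2$) together with $\|u\|_{L^1(\Omega)}\le M_1$ gives $\int_\Omega u^2\le\delta\int_\Omega|\nabla u|^2+C_\delta$ for every $\delta>0$. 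Choosing $\varepsilon,\delta$ small so that both bad terms are absorbed by $\int_\Omega|\nabla u|^2$, and invoking Lemma \ref{GN-inter} once more to bound $\int_\Omega u^2$ from below by $\int_\Omega|\nabla u|^2$ up to a constant, one reaches $\frac{d}{dt}\int_\Omega u^2+c_0\int_\Omega u^2\le C$ with $c_0>0$, whence $\int_\Omega u^2\le C$ on $(0,T_m)$.

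\emph{The case $\tau>0$.} Here I would carry out the classical two-dimensional bootstrap (cf. \cite{Osakix391, Xiang15-JDE, Xiang18-JMP}) for the coupled functional $y(t):=\int_\Omega u^2+\tau^2\int_\Omega|\nabla v|^4$. Testing the $u$-equation by $u$ and keeping the chemotactic term as $\chi\int_\Omega u\nabla u\cdot\nabla v\le\eta\int_\Omega|\nabla u|^2+\frac{\chi^2}{4\eta}\int_\Omega u^2|\nabla v|^2$, while \dref{x1.731426677gghh}, $\|v\|_{L^2(\Omega)}\le C$ (from \dref{vl2-est-sub}) and Young's inequality bound the haptotactic term by $C\|u\|_{L^4(\Omega)}^2+C\int_\Omega u^2$ and $\int_\Omega uf(u,w)\le C$. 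In parallel I would differentiate $\int_\Omega|\nabla v|^4$ and insert $\tau v_t=\Delta v+u-v$; the standard computation produces the good terms $\int_\Omega|\nabla|\nabla v|^2|^2$, $\int_\Omega|\nabla v|^2|D^2v|^2$, $\int_\Omega|\nabla v|^4$, the coupling $\int_\Omega u^2|\nabla v|^2$, and a boundary term $\int_{\partial\Omega}|\nabla v|^2\partial_\nu|\nabla v|^2$, the last of which is disposed of via $\partial_\nu|\nabla v|^2\le 2C_\Omega|\nabla v|^2$ on $\partial\Omega$ and a trace inequality using $\|\nabla v\|_{L^2(\Omega)}\le C$ from \dref{zjscz2.5297x96302222114}. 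The remaining bad integrals are bounded, up to constants, by $\|u\|_{L^4(\Omega)}^2\|\nabla v\|_{L^4(\Omega)}^2$ and $\|u\|_{L^4(\Omega)}^2$; I would estimate $\|u\|_{L^4(\Omega)}^2\le\sqrt{\varepsilon}\,C\|\nabla u\|_{L^2(\Omega)}^{3/2}+C_\varepsilon$ by Lemma \ref{lemma45630jjjj223} with $q=4$, $r=1$, $g(s)=s\ln s$ (admissible since $\|u\ln u\|_{L^1(\Omega)}\le C$ by \dref{zjscz2.5297x96302222114}) and $\|\nabla v\|_{L^4(\Omega)}^2=\||\nabla v|^2\|_{L^2(\Omega)}\le C(\|\nabla|\nabla v|^2\|_{L^2(\Omega)}^{1/2}+1)$ by Lemma \ref{GN-inter}. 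A Young inequality with exponents $\frac43$, $4$ applied to $\|\nabla u\|_{L^2}^{3/2}\|\nabla|\nabla v|^2\|_{L^2}^{1/2}$, together with a choice of $\eta$, then $\varepsilon$, then $\delta$ small enough that every $\int_\Omega|\nabla u|^2$-contribution carries a factor $\sqrt{\varepsilon}$ or a free small $\delta$ and every $\int_\Omega|\nabla|\nabla v|^2|^2$-contribution carries a free small $\delta$, absorbs all bad terms; adding the two differential inequalities and using Lemma \ref{GN-inter} to bound $\int_\Omega u^2$ below by $\int_\Omega|\nabla u|^2$ yields $\frac{d}{dt}y+\lambda y\le C$ for some $\lambda>0$, hence $y\le C$ and \dref{ul2-bdd}. (For $t$ near $0$ one first runs the estimate on $[t_0,T_m)$ and closes $(0,t_0]$ by parabolic smoothing.)

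\emph{The main obstacle.} The decisive point is the super-quadratic integral $\int_\Omega u^3$ (respectively $\int_\Omega u^2|\nabla v|^2$ when $\tau>0$): the damping in \eqref{f-source}, even in its reinforced form \dref{zjscz2.5297x96ssddd302222114} available for $\tau=0$, only controls $u^2/\prod_{i=1}^{m}\ln^{[i]}(\cdot)$, which is asymptotically negligible against $u^3$, so this integral cannot be absorbed pointwise by the reaction and must instead be passed through the diffusion by the logarithmic Gagliardo--Nirenberg inequality, using essentially the improved $L^1$-regularity of $g(u)$ from Lemma \ref{lemma45630223}. When $\tau>0$ there is the additional bookkeeping difficulty of the coupled estimate --- matching the interpolation exponents and ordering the small parameters so that the large constant $C_\varepsilon$ produced by the logarithmic Gagliardo--Nirenberg inequality multiplies only terms that a still-free parameter can swallow.
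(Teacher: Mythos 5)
Your proposal is correct and follows essentially the same route as the paper: testing the $u$-equation by $u$, coupling with the evolution of $\int_\Omega|\nabla v|^4$ when $\tau>0$, using the pointwise bound \dref{x1.731426677gghh} for the haptotactic term, and absorbing the super-quadratic integrals into $\int_\Omega|\nabla u|^2$ and $\int_\Omega|\nabla|\nabla v|^2|^2$ via Lemma \ref{lemma45630jjjj223} fed by the $L^1$-bound on $g(u)$ from Lemma \ref{lemma45630223}. The only deviations are cosmetic choices of H\"older/Young splittings (e.g.\ $\|u\|_{L^4}^2\|\nabla v\|_{L^4}^2$ in place of the paper's $\epsilon\int_\Omega|\nabla v|^6+C\int_\Omega u^3$, and G--N interpolation of $\int_\Omega u^2$ against $\int_\Omega|\nabla u|^2$ in place of the paper's use of the damping in $f$), which do not change the argument.
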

\begin{proof}
Multiplying both sides of the first equation in \dref{1.1} by $u$, integrating over $\Omega$  by parts and  applying \dref{x1.731426677gghh},  for $t\in(0, T_m)$, we arrive at
\begin{equation}
\begin{split}
& \frac{1}{{2}}\frac{d}{dt}\int_\Omega u^2+\int_{\Omega}|\nabla u|^2
\\
&=-\chi\int_\Omega \nabla\cdot( u\nabla v)
   u -\xi\int_\Omega \nabla\cdot( u\nabla w)
   u  +\int_\Omega    u f(u,w)  \\
&=\chi\int_\Omega  u\nabla u\nabla v
 +\xi\int_\Omega  u\nabla u\nabla w
   +\int_\Omega    u f(u,w)\\
&=-\frac{\chi}{2}\int_\Omega  u^{2}\Delta v
  -\frac{\xi}{2}\int_\Omega  u^{2}\Delta w
   +
\int_\Omega    u f(u,w)\\
&\leq -\frac{\chi}{2}\int_\Omega  u^{2}\Delta v
  +\frac{\xi}{2}\int_\Omega  u^{2} \left(\tau \|w_0\|_{L^\infty} v+\kappa\right)
   +\int_\Omega    u f(u,w).
\end{split}
\label{cz2aasweee.5114114}
\end{equation}
\textbf{Case I $\tau=0:$}  We substitute $-\Delta v=u-v$ by  \dref{1.1} into \dref{cz2aasweee.5114114} to get that
\begin{equation}
\begin{array}{rl}
&\disp{\frac{1}{2}\frac{d}{dt}\int_\Omega u^2+\int_{\Omega}|\nabla u|^2\leq\frac{\chi}{2}\int_\Omega  u^{3}
  +\frac{\xi\kappa}{2}\int_\Omega  u^{2}
   +
\int_\Omega    u f(u,w)}.
\end{array}
\label{cz2aasweee.ssdddkkllll5ddfgg114114}
\end{equation}
Next, since \dref{1.3xcxddfff29} together with the first case of \dref{thm-con}  implies that
$$C_1:=\sup_{s>0}\left\{ s f(s,w)+\frac{(\xi\kappa+1)}{2}s^{2} \right\}<+\infty,$$
so that we infer from  \dref{cz2aasweee.ssdddkkllll5ddfgg114114} that
\begin{equation}
\begin{array}{rl}
&\disp{\frac{d}{dt}\int_\Omega u^2+\int_\Omega  u^{2}+\int_{\Omega}|\nabla u|^2\leq\chi\int_\Omega  u^{3}+2C_1|\Omega|.
  }
\end{array}
\label{cz2aasweee.ssdddsdfggkkllll5ddfgg114114}
\end{equation}
For the integral on the  right-hand side, with $g$ defined by \dref{x1.73sdfghh1426677gg}, using the  estimates obtained in  Lemmas \ref{ul1-bdd} and \ref{lemma45630223},  we deduce from  Lemma \ref{lemma45630jjjj223} and its corollary  that
\begin{equation}
\begin{array}{rl}
\disp\chi \int_\Omega  u^{3}=&\disp{\chi\|u\|_{L^3(\Omega)}^{3}}\\
\leq&\disp{\chi\varepsilon \|\nabla u\|_{L^2(\Omega)}^{2}\|g(u)\|_{L^1(\Omega)}+C\chi\|u\|_{L^1(\Omega)}^3+
  C_{\varepsilon} \chi}\\
\leq&\disp{ \chi\varepsilon \|\nabla u\|_{L^{2}(\Omega)}^{2}C+C\chi M_1^3+
  C_{\varepsilon} \chi}\\
  \leq &\|\nabla u\|_{L^{2}(\Omega)}^2+C_2
\end{array}
\label{cz2.563022222ikopl2sdfssfggg44}
\end{equation}
by  picking  sufficiently small $\varepsilon$.  Then an ODI  for $\|u\|_{L^2}^2$ follows easily from \dref{cz2aasweee.ssdddsdfggkkllll5ddfgg114114} and \dref{cz2.563022222ikopl2sdfssfggg44}:
$$
\frac{d}{dt}\int_\Omega u^2+\int_\Omega  u^{2}\leq C_3,
$$
 which, upon being solved, yields readily the $L^2$-boundedness of $u$, as desired in \eqref{ul2-bdd}.

\textbf{Case II $\tau>0$:}  Notice that  $2\nabla v\cdot\nabla\Delta v = \Delta|\nabla v|^2-2|D^2v|^2$, by a straightforward computation using the
second equation in \dref{1.1} and  integrations by parts, we see  that
\begin{equation}
\begin{split}
 &\frac{\tau}{{2}}\frac{d}{dt}\int_\Omega |\nabla v |^4+\int_{\Omega}|\nabla|\nabla v|^2 |^2+2\int_{\Omega}|\nabla v|^2|D^2 v |^2+2\int_{\Omega}|\nabla v |^4\\
=&\int_{\partial\Omega}  |\nabla v|^2\frac{\partial|\nabla v|^2}{\partial\nu}-2\int_{\Omega}u\Delta v|\nabla v |^2
-2\int_{\Omega}u\nabla v\nabla|\nabla v |^2.
\end{split}
\label{cz2.51dfgggssddjddffjuiissdddsddid14114}
\end{equation}
Then we deduce from \dref{cz2aasweee.5114114}, \dref{cz2.51dfgggssddjddffjuiissdddsddid14114} and the Young's inequality   that
\begin{equation}
\begin{split}
&\frac{1}{2}\frac{d}{dt} \int_\Omega \left(u^2+ \tau |\nabla v|^4\right)+\int_{\Omega}|\nabla u|^2+\int_{\Omega}|\nabla|\nabla v|^2 |^2+2\int_{\Omega}|\nabla v|^2|D^2 v |^2+2\int_{\Omega}|\nabla v |^4\\
\leq &\int_{\partial\Omega}  |\nabla v|^2\frac{\partial|\nabla v|^2}{\partial\nu}-2\int_{\Omega}u\Delta v|\nabla v |^2
-2\int_{\Omega}u\nabla v\nabla|\nabla v |^2\\
&+\chi\int_\Omega  u\nabla u\nabla v
  +\frac{\xi}{2}\int_\Omega  u^{2} (\tau \|w_0\|_{L^\infty} v+\kappa)
   +\int_\Omega    u f(u,w) \\
\leq & \int_{\partial\Omega}  |\nabla v|^2\frac{\partial|\nabla v|^2}{\partial\nu}-2\int_{\Omega}u\Delta v|\nabla v |^2
+2\int_{\Omega}u^2|\nabla v|^2+\frac{1}{2}\int_{\Omega}|\nabla|\nabla v |^2|^2 +\frac{1}{2}\int_\Omega  |\nabla u|^2  \\
&+\frac{\chi^2}{2}\int_\Omega  u^{2}|\nabla v|^2
  +\frac{4}{5}\int_\Omega  u^{\frac{5}{2}}+\frac{1}{5}(\frac{\xi}{2})^5\left(\tau \|w_0\|_{L^\infty}\right)^5\int_\Omega   v^5+\frac{\xi\kappa}{2}\int_\Omega  u^{2} +\int_\Omega    u f(u,w).  \end{split}
\label{cz2.51dfgggssddjddffjusdfffiiidsddd141sdfff14}
\end{equation}
Next, by \dref{1.3xcxddfff29} and the second case of \dref{thm-con},  we obtain that
\begin{equation}C_4:=\sup_{s>0}\left\{ s f(s,w)+\frac{(1+\xi\kappa)}{2}s^2+\frac{4}{5}s^{\frac{5}{2}}-s^3 \right\}<+\infty.
\label{cz2.51dfgggssddjddffjusdfffiiidsddd141sdfffssddff14}
\end{equation}
 In light of the  $W^{1,2}$-boundedness of $v$ established in \dref{vl2-est-sub} and  \dref{zjscz2.5297x96302222114} and the 2D Sobolev embedding $W^{1,2}(\Omega)\hookrightarrow L^5(\Omega)$, we get the $L^5$-boundedness of $v$ on $(0,T_m)$. Hence, joining  \dref{cz2.51dfgggssddjddffjusdfffiiidsddd141sdfffssddff14} with  \dref{cz2.51dfgggssddjddffjusdfffiiidsddd141sdfff14}, we arrive at
 \begin{equation}
\begin{split}
\frac{1}{2}\frac{d}{dt} \int_\Omega \left(u^2+ \tau |\nabla v|^4\right)&+\frac{1}{2} \int_\Omega u^2+\frac{1}{2}\int_{\Omega}|\nabla u|^2+\frac{1}{2}\int_{\Omega}|\nabla|\nabla v|^2 |^2\\
&+2\int_{\Omega}|\nabla v|^2|D^2 v |^2+2\int_{\Omega}|\nabla v |^4\\
\leq & \int_{\partial\Omega}  |\nabla v|^2\frac{\partial|\nabla v|^2}{\partial\nu}-2\int_{\Omega}u\Delta v|\nabla v |^2\\
&+(2+\frac{\chi^2}{2})\int_\Omega  u^{2}|\nabla v|^2
   + \int_\Omega    u^3+C_4|\Omega|.\end{split}
\label{cz2.51dfgggssddjddfsdffffjsdffusdfffiiidsddd141sdfff14}
\end{equation}
We are at the same situation as we have in \cite{Xiang18-JMP}: Given the boundedness of $\|\nabla  v \|_{L^2}^2$, it is well-known that (cf. \cite{Ishida, Tao41215, Xiang18-JMAA}) the boundary trace embedding  implies that
\begin{equation} \label{com-est-4}
\begin{split}
\int_{\partial\Omega}  |\nabla v|^{2}\frac{\partial}{\partial \nu} |\nabla v|^2&\leq \epsilon \int_\Omega |\nabla |\nabla v|^2|^2+C_\epsilon \Bigr(\int_{\Omega} |\nabla v|^2\Bigr)^2\\
&\leq \epsilon \int_\Omega |\nabla |\nabla v|^2|^2+C_\epsilon,\quad \quad  \forall \epsilon>0.
\end{split}
\end{equation}
Next, since $|\Delta  v | \leq\sqrt{2}|D^2 v |$, by the Young inequality, we estimate, for any $\epsilon>0$,  that
\begin{equation}
\begin{split}
&-2\int_{\Omega}u\Delta v|\nabla v |^2+(2+\frac{\chi^2}{2})\int_\Omega  u^{2}|\nabla v|^2
   + \int_\Omega    u^3\\
&\leq \int_\Omega  |\nabla  v |^{2}|D^2 v |^2+(4+\frac{\chi^2}{2})\int_\Omega  u^2  |\nabla  v |^{2}+ \int_\Omega    u^3\\
&\leq \int_\Omega  |\nabla  v |^{2}|D^2 v |^2+\epsilon\int_{\Omega}  |\nabla v|^6+\left[\frac{2}{3}(3\epsilon)^{-\frac{1}{2}}(4+\frac{\chi^2}{2})^{\frac{3}{2}}+1\right]\int_\Omega u^3.
\end{split}
\label{444cz2.5ghju48hjuikl1}
\end{equation}
From the boundednedd of $\|\nabla v\|_{L^2}$,  we use   the 2D G-N inequality to derive that
\begin{equation}
\begin{split}
 \int_{\Omega}  |\nabla v|^6= \||\nabla v|^2\|^3_{L^3}&\leq C_5 \|\nabla |\nabla  v |^{2}\|_{L^2}^{2}\| |\nabla  v |^2\|_{L^1}+C_5\|   |\nabla v |^2\|_{L^1}^{3}\\
&\leq C_6 \int_{\Omega}|\nabla |\nabla  v |^2 +C_6.
\end{split}
\label{cz2.51dfgggssjjjjjddjddfsdsdffggfffssdddddfjsdffusdfffiiidsddd141sdfff14}
\end{equation}
For the integral involving $\int_{\Omega}u^3$, based on the boundedness $\|u\|_{L^1}+\|u\ln u\|_{L^1}$ as ensured in Lemmas \ref{ul1-bdd} and \ref{lemma45630223},  we easily infer from the generalized  G-N inequality in Lemma \ref{lemma45630jjjj223}  that
\begin{equation} \label{ul3-bdd by}
\int_\Omega  u^3\leq \eta \int_\Omega |\nabla u |^2+C_\eta, \quad \quad \forall \eta>0.
\end{equation}
Combining  the estimates \dref{com-est-4},   \dref{444cz2.5ghju48hjuikl1}, \dref{cz2.51dfgggssjjjjjddjddfsdsdffggfffssdddddfjsdffusdfffiiidsddd141sdfff14} and \dref{ul3-bdd by}  with \dref{cz2.51dfgggssddjddfsdffffjsdffusdfffiiidsddd141sdfff14} and choosing sufficiently small  $\epsilon>0$ and $\eta>0$,  we obtain an ODI as follows:
$$
\frac{d}{dt} \int_\Omega \left(u^2+ \tau|\nabla v|^4\right)+ \min\left\{1,\frac{4}{\tau}\right\}\int_\Omega \Bigr(u^2+ \tau |\nabla v|^4\Bigr)\leq C(u_0, v_0, |\Omega|, \chi, \xi, \tau, f),
$$
which directly yields the uniform boundedness of $\|u\|_{L^2}+\|\nabla v\|_{L^4}$, as desired in \eqref{ul2-bdd}.
\end{proof}

\subsection{From $L^2$ to $L^\infty$: The proof of Theorem \ref{theorem3}}
The proof now becomes rather standard. Thanks to the    $L^2$-boundedness of $u$ in \eqref{ul2-bdd}, if $\tau>0,$ we infer from  the known smoothing $L^p$-$L^q$-estimates for the Neumann heat semigroup  $\{e^{t\Delta}\}_{t\geq0}$ in $\Omega$ (cf.   \cite{Cao15, Horstmann791, Winkler792}) to the semigroup representation of the $v$-equation in \dref{1.1} that
$$
\| v(\cdot, t)\|_{W^{1,q}}\leq C_1\left(1+\sup_{s\in(0,t)}\|u(\cdot, s)\|_{L^2}\right)\leq C_2, \ \ \ \forall t\in(0, T_m)~~\mbox{and }~~q\in(1,+\infty).
$$
While, if $\tau=0$,  the standard $W^{2,p}$-regularity theory  (see e.g. \cite{Ladyzenskaja710}) to the second equation in \dref{1.1} implies the $W^{2,2}$-boundedness of $v(\cdot, t)$, and hence  the Sobolev embeddings $W^{2,2}(\Omega) \hookrightarrow W^{1,q}(\Omega)\hookrightarrow L^{\infty}(\Omega)$ (for all $q\in(2,+\infty)$) in two-dimensional space entail that
\begin{equation}
\begin{array}{rl}
\| v(\cdot, t)\|_{L^{\infty}(\Omega)}+\| v(\cdot, t)\|_{W^{1,q}(\Omega)}\leq C_3, ~~ \forall   t\in(0,T_m)~~\mbox{and }~~q\in(2,+\infty).
\end{array}
\label{cz2sedfgg.5g5gllllghh56789hhjui7ssddd8jj90099}
\end{equation}
Multiplying both sides of the first equation in \dref{1.1} by $3u^{2}$, integrating over $\Omega$ by parts and applying the pointwise boundedness of $\Delta w$ in \eqref{x1.731426677gghh}, the $L^2$-boundedness of $u$ and \dref{cz2sedfgg.5g5gllllghh56789hhjui7ssddd8jj90099},  Young's inequality with epsilon and the 2D G-N inequality, we arrive at
\begin{equation}
\begin{split}
&\frac{d}{dt}\int_\Omega u^3+\int_\Omega u^3
+6\int_{\Omega}u |\nabla u|^2
\\
=&6\chi\int_\Omega  u^2\nabla u\cdot\nabla v
   -2\xi\int_\Omega  u^2\Delta w
   +\int_\Omega u^3+
3\int_\Omega   u^2f(u,w) \\
\leq& 6\chi\int_\Omega  u^2\nabla u\cdot\nabla v
   +2\xi\int_\Omega  u^2(\tau \|w_0\|_{L^\infty} v+\kappa)
   +\int_\Omega   \left[u^3+3u^2f(u,w)\right]  \\
\leq&6\chi\int_\Omega  u^2\nabla u\cdot\nabla v
   +\xi C_4\int_\Omega  u^2
   +\int_\Omega   \left[u^3+3u^2f(u,w)\right]\\
\leq & 3\int_\Omega  u|\nabla u|^2+3\chi^2\int_\Omega u^3|\nabla v|^2+\int_\Omega   \left[\xi C_4 u^2+u^3+3u^2f(u,w)\right]\\
\leq &3\int_\Omega  u|\nabla u|^2+3\chi^2\int_\Omega   u^4+\frac{3^4\chi^2}{4^4}\int_\Omega |\nabla v|^8+\int_\Omega   u^4 +C_5|\Omega|\\
\leq &3\int_\Omega  u|\nabla u|^2+(3\chi^2+1)\| u^\frac{3}{2}\|_{L^\frac{8}{3}}^\frac{8}{3}+C_6\\
\leq &3\int_\Omega  u|\nabla u|^2+(3\chi^2+1)C_7\left(\|\nabla  u^\frac{3}{2}\|_{L^2}^\frac{4}{3}\| u^\frac{3}{2}\|_{L^\frac{4}{3}}^\frac{4}{3}+\| u^\frac{3}{2}\|_{L^\frac{4}{3}}^\frac{8}{3}\right)+C_8\\
\leq&3\int_\Omega  u|\nabla u|^2+(3\chi^2+1)C_9 \|\nabla  u^\frac{3}{2}\|_{L^2}^\frac{4}{3}+C_{10}\\
\leq &4\int_\Omega  u|\nabla u|^2+C_{11},
\end{split}
\label{cz2sedfgg.5g5gljjjjkklllghh56789hhjui7ssddd8jj90099}
\end{equation}
where we have used  \eqref{f-source}, \dref{1.3xcxddfff29}  and \dref{thm-con} to see that
$$C_5:=\sup_{s>0}\left\{ s^{2} f(s,w)+\xi C_4s^2+s^{3} -s^4\right\}<+\infty.
$$
Then  \dref{cz2sedfgg.5g5gljjjjkklllghh56789hhjui7ssddd8jj90099} implies that
$$
\frac{d}{dt}\int_\Omega  u^3+\int_\Omega  u^3\leq C_{11},
$$
yielding trivially
\begin{equation}
\|u(\cdot, t)\|_{L^{{3}}(\Omega)}\leq C_{12}, \quad \forall  t\in(0,T_m).
\label{cz2aasweeeddfff.5ssedfssdffgddff114114}
\end{equation}
In light of  \dref{cz2aasweeeddfff.5ssedfssdffgddff114114} and the equation $\tau v_t=\Delta v-v+u$, we use the smoothing estimates for the Neumann heat semigroup (case of $\tau>0$) or  the $W^{2,p}$-estimate (case of $\tau=0$) and the
embedding $W^{2,3}(\Omega) \hookrightarrow W^{1,\infty}(\Omega)$  in two-dimensional space  to conclude that
\begin{equation}
\begin{array}{rl}
\| v(\cdot, t)\|_{W^{1,\infty}(\Omega)}\leq C_{13}, \quad \forall  t\in(0,T_m).
\end{array}
\label{cz2sedfgg.5g5gllllghh567ssdf89hhjui7ssdddssgg8jj90099}
\end{equation}
This coupled with the fact  that $w(x,t)=w(x,0)e^{-\int_0^t v(x,s)ds}$ gives rise to
\begin{equation}
\|\nabla w(\cdot,t)\|_{L^{\infty}(\Omega)}  \leq C_{14} , \quad \forall  t\in(0,T_m).
\label{cz2sedfgg.5g5ssdfffgggllllghh567ssdf89hhjui7ssdddssgg8jj90099}
\end{equation}
To derive the $L^\infty$-boundedness of $u$, based on the boundedness results obtained in \dref{cz2aasweeeddfff.5ssedfssdffgddff114114}, \dref{cz2sedfgg.5g5gllllghh567ssdf89hhjui7ssdddssgg8jj90099}
and \dref{cz2sedfgg.5g5ssdfffgggllllghh567ssdf89hhjui7ssdddssgg8jj90099} and our assumptions \dref{1.3xcxddfff29} and \dref{thm-con}, we use  the variation-of-constants formula for the  $u$-equation in \eqref{1.1}  and the well-known smoothing $L^p$-$L^q$-estimates for the Neumann heat  semigroup  $\{e^{t\Delta}\}_{t\geq0}$ to deduce that
\begin{equation}\begin{split}
\|u(t)\|_{L^\infty}&\leq \|e^{t\Delta }u_0\|_{L^\infty}+\chi\int_0^t\left\|e^{(t-s)\Delta }\nabla \cdot((u\nabla v)(s))\right\|_{L^\infty}ds \\
  &\quad + \xi\int_0^t\left\|e^{(t-s)\Delta }\nabla \cdot((u\nabla w)(s))\right\|_{L^\infty}ds+\int_0^t\left\|e^{(t-s)\Delta }f(u(s),w(s))\right\|_{L^\infty}ds\\
  \leq &\|u_0\|_{L^\infty}+C_{15}\chi\int_0^t\left(1+(t-s)^{-\frac{1}{2}-\frac{1}{3}}\right)e^{-\lambda_1(t-s) }\left\|(u\nabla v)(s)\right\|_{L^3}ds\\
  &+C_{16}\xi\int_0^t\left(1+(t-s)^{-\frac{1}{2}-\frac{1}{3}}\right)e^{-\lambda_1(t-s) }\left\|(u\nabla w)(s)\right\|_{L^3}ds\\
  &+C_{17}\int_0^t\left(1+(t-s)^{-\frac{2}{3}}\right)e^{-\lambda_1(t-s) }\left[1+\left\|u^2(s)\right\|_{L^\frac{3}{2}}\right]ds\\
 \leq & \|u_0\|_{L^\infty}+C_{15}\chi\int_0^t\left(1+(t-s)^{-\frac{1}{2}-\frac{1}{3}}\right)e^{-\lambda_1(t-s) }\left\|u\right\|_{L^3}\left\|\nabla v\right\|_{L^\infty}ds\\
  &+C_{16}\xi\int_0^t\left(1+(t-s)^{-\frac{1}{2}-\frac{1}{3}}\right)e^{-\lambda_1(t-s) }\left\|u\right\|_{L^3}\left\|\nabla w\right\|_{L^\infty}ds\\
  &+C_{17}\int_0^t\left(1+(t-s)^{-\frac{2}{3}}\right)e^{-\lambda_1(t-s) }\left[1+\left\|u(s)\right\|_{L^3}^2\right]ds\\
  \leq &\|u_0\|_{L^\infty}+C_{18}(\chi+\xi)\int_0^t\left(1+z^{-\frac{5}{6}}\right)e^{-\lambda_1z }dz+C_{19}\int_0^t\left(1+z^{-\frac{2}{3}}\right)e^{-\lambda_1z }dz\\
   \leq &\|u_0\|_{L^\infty}+C_{20}(1+\chi+\xi),  \quad \forall t\in(0, T_m).
  \end{split}
  \label{cz2.5g5gghh56789hhjui78jj90099}
  \end{equation}
Here, $\lambda_1(>0)$ is the first nonzero eigenvalue of $-\Delta$ under homogeneous Neumann boundary condition.  In view of \dref{cz2sedfgg.5g5gllllghh567ssdf89hhjui7ssdddssgg8jj90099} and \dref{cz2.5g5gghh56789hhjui78jj90099} and \eqref{1.163072x} of  Lemma \ref{lemma70}, we first see that $T_m=\infty$, and then,  the desired unform boundedness \dref{uvw-bdd} follows from \dref{cz2sedfgg.5g5gllllghh567ssdf89hhjui7ssdddssgg8jj90099}, \dref{cz2sedfgg.5g5ssdfffgggllllghh567ssdf89hhjui7ssdddssgg8jj90099} and \dref{cz2.5g5gghh56789hhjui78jj90099}; that is, the classical solution $(u,v,w)$ of \dref{1.1} is global in time and is uniformly bounded.

{\bf Acknowledgements}: The authors greatly thank  the two anonymous referees and  two editors for giving positive and valuable comments and suggestions from different perspectives, which further helped them to improve the exposition  of this work.  The research of T. Xiang   is  funded by  the NSF of China  (No. 11601516 and 11871226) and the  Research Funds of Renmin University of China (No. 2018030199). The research of J. Zheng is  supported by the Shandong Provincial
Science Foundation for Outstanding Youth (No. ZR2018JL005), the NSF of China (No. 11601215) and China Postdoctoral Science Foundation (No. 2019M650927, 2019T120168).

\end{document}